
\documentclass{amsart}
\usepackage{amsmath, amssymb, amsfonts, amscd, amsthm, mathrsfs}
\usepackage{tikz}
\usetikzlibrary{matrix,arrows}
\usepackage{comment}

\renewcommand{\hat}{\widehat}

\renewcommand{\hat}{\widehat}

\renewcommand{\bar}{\overline}

\newcommand{\bV}{\bar{V}}
\newcommand{\bff}{\bar{f}}
\newcommand{\bmu}{\pmb{\mu}}
\newcommand{\balpha}{\pmb{\alpha}}

\newcommand{\A}{\mathbf{A}}

\newcommand{\F}{\mathbf{F}}

\newcommand{\Q}{\mathbf{Q}}
\newcommand{\Z}{\mathbf{Z}}
\newcommand{\R}{\mathbf{R}}

\newcommand{\ba}{\mathbf{a}}
\newcommand{\bc}{\mathbf{c}}
\newcommand{\bv}{\mathbf{v}}
\newcommand{\bg}{\mathbf{g}}
\newcommand{\bx}{\mathbf{x}}
\newcommand{\bu}{\mathbf{u}}
\newcommand{\bz}{\mathbf{z}}

\newcommand{\bt}{\mathbf{t}}

\newcommand{\bbA}{\mathbb{A}}

\newcommand{\cD}{\mathscr{D}}
\newcommand{\cG}{\mathscr{G}}
\newcommand{\cH}{\mathscr{H}}
\newcommand{\cK}{\mathscr{K}}

\newcommand{\cO}{\mathscr{O}}
\newcommand{\cP}{\mathscr{P}}
\newcommand{\cQ}{\mathscr{Q}}
\newcommand{\cU}{\mathscr{U}}
\newcommand{\cZ}{\mathscr{Z}}

\newcommand{\ha}{\hat{a}}

\newcommand{\Tr}{\mathrm{Tr}}
\newcommand{\ord}{\mathrm{ord}}

\newcommand{\Spec}{\mathrm{Spec}}

\newcommand{\Mat}{\mathrm{Mat}}

\theoremstyle{plain}
\newtheorem{theorem}{Theorem}[section]
\newtheorem{proposition}[theorem]{Proposition}
\newtheorem{lemma}[theorem]{Lemma}

\theoremstyle{definition}
\newtheorem{definition}[theorem]{Definition}
\newtheorem{remark}[theorem]{Remark}
\newtheorem{examples}[theorem]{Examples}

\newtheorem{acknowledgments}{Acknowledgments}

\begin{document}

\title[On a theorem of Ax and Katz]
{On a theorem of Ax and Katz}
\keywords{Chevalley-Warning theorem, generic $p$-divisibility, 
$L$-function of exponential sums, zeros of polynomials over finite fields,
Ax-Katz bound, weight of support set.}
\subjclass[2000]{11G25,14G15}

\author{Hui June Zhu}
\address{
Department of mathematics,
SUNY at Buffalo,
Buffalo, NY 14260}
\email{hjzhu@math.buffalo.edu}
\date{\today}
\maketitle

\begin{abstract}
The well-known theorem of Ax and Katz gives a $p$-divisibility bound
for the number of rational points on an algebraic variety $\bV$ over a finite field
of characteristic $p$ in terms of the degree and number of variables of 
defining polynomials of $\bV$. It was strengthened by Adolphson-Sperber 
in terms of Newton polytope of the support set $\cG$ of $\bV$. 
In this paper we prove that for every generic algebraic variety $V$ over $\bar\Q$ 
supported on $\cG$ the Adolphson-Sperber bound can be achieved on special fibre
at $p$ for a set of prime $p$ of positive density in $\Spec(\Z)$.
Moreover we show that if $\cG$ has certain combinatorial conditional number
nonzero then the above bound is achieved at special fibre at $p$ for all large enough
$p$. 
\end{abstract}

\section{Introduction}
\label{S:introduction}

In this paper $p$ is a prime number and $q=p^a$ for some integer $a>0$.
Let $\bV$ be an algebraic variety over $\F_q$ defined by a set of non-constant polynomials 
$\bff_1,\ldots,\bff_r$ in $\F_q[x_1,\ldots, x_n]$ in $n$ variables. 
We study $p$-divisibility of 
the cardinality $|\bV(\F_q)|$ of the set of 
$\F_q$-rational points on $V$. 
This problem (in a slightly different form) was first proposed by Artin (see \cite{Art65}) in 1935,
a first bound was given by Chevalley (see \cite{Che36}) and Warning (see \cite{War36})
using elementary method. 
From then on $p$-divisibility problem is also known as
{\em Chevalley-Warning problem}.
Ax (see \cite{Ax64}) 
and subsequently Katz (see \cite{Kat71})
used Dwork's method to give the following bound which is well known as the
{\em Ax-Katz bound},
\begin{equation}\label{E:Ax-Katz}
\ord_q|\bV(\F_q)| \geq \left\lceil\frac{n-\sum_{j=1}^{r} \deg(\bff_j)}
{\max_{1\leq j\leq r}\deg(\bff_j)} \right\rceil.
\end{equation}
(See also \cite{Wan89} for an elementary proof.)
For each integral point $\bg=(g_1,\ldots,g_n)\in\Z_{\geq 0}^n$
write $\sigma_p(\bg):=\sum_{i=1}^{n}\sigma_p(g_i)$ where $\sigma(g_i)$ denote the sum of 
$p$-adic digits in $g_i\in\Z_{\geq 0}$. 
Define $\sigma_p(\bff_j):=\max_{\bg\in \cG_j}(\sigma_p(\bg))$. 
Moreno-Moreno observed that 
one can always reduce $\bV/\F_q$ to $\bV'/\F_p$ where $\bV'$ is defined by
a set of $ra$ polynomials in $na$ variables with degrees $\leq \sigma_p(\bff_j)$ of $\bff_j$. 
They apply Ax-Katz's bound on $\bar{V'}$ and get the {\em Moreno-Moreno bound}:
\begin{equation}\label{E:MM}
\ord_q |\bV(\F_q)| \geq  \frac{1}{a}\left \lceil a\cdot \frac{n-\sum_{j=1}^{r} \sigma_p(\bff_j)}
{\max_{1\leq j\leq r}\sigma_p(\bff_j)}\right\rceil.
\end{equation}
This bound only potentially improves Ax-Katz bound for small $p$, namely for $p<\max_j\deg(\bff_j)$.
Let $\bff:=z_1\bff_1+\ldots+z_r\bff_r$ where $z_1,\ldots,z_r$ are new variables
and $\Delta(\bff)$ its Newton polytope in $\R^{n+r}$.
Let $w(\bff)$ be the least positive rational number $c$
such that the dilation $c\cdot \Delta(\bff)$ contains an integral point of all positive coordinates.
Adolphson-Sperber proved in \cite{AS87}
the following {\em Adolphson-Sperber bound} that strengthens Ax-Katz
\begin{equation}\label{E:AS}
\ord_q |\bV(\F_q)|\geq w(\bff)-r.
\end{equation}
We prove in this paper that Adolphson-Sperber bound is 
an asymptotic generic bound in Theorem \ref{T:main} below.
We write $\bmu=w(\bff)-r$ for the rest of the paper.

For $1\leq j\le r$ let $f_j=\sum_{\bg\in \cG_j}a_{j,\bg}\bx^{\bg}$ 
where $a_{j,\bg}\neq 0$.
Let $V:=V(f_1,\ldots,f_r)$ 
be the algebraic variety defined by the vanishing of $f_1,\ldots,f_r$.
Write $\bbA^{\cG_1,\ldots,\cG_r}$ for the space of all such algebraic varieties 
$V$, and write $\bbA^{\cG_1,\ldots,\cG_r}(K)$ for all such $V$ defined over $K$,
that is, $f_j\in K[x_1,\ldots,x_n]$.

\begin{theorem}\label{T:main}
Fix $\cG_1,\ldots,\cG_r$ in $\Z_{\ge 0}^{n}$, and let $\bbA^{\cG_1,\ldots,\cG_r}$ be
the space of all algebraic varieties supported on $\cG_1,\ldots,\cG_r$.
\begin{enumerate}
\item
For every generic $V$ in $\bbA^{\cG_1,\ldots,\cG_r}(\bar\Q)$
there exists a set of prime numbers $p$ of positive density in $\Spec(\Z)$ such that
\begin{eqnarray*}
\ord_q |\bar{V}(\F_q)|  &=& \bmu.
\end{eqnarray*}
\item 
There exists a conditional number function
$\bc(\cG_1,\ldots,\cG_r)$ defined explicitly in (\ref{E:conditional}), such that 
if $\bc(\cG_1,\ldots,\cG_r)\neq 0$, then 
for every $V$ in 
$\bbA^{\cG_1,\ldots,\cG_r}(\bar\Q)$
for every prime $p$ large enough 
\begin{eqnarray*}
\ord_q |\bar{V}(\F_q)|  &=& \bmu.
\end{eqnarray*}
\end{enumerate}
\end{theorem}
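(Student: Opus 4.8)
The plan is to realize the Adolphson--Sperber lower bound $\bmu = w(\bff)-r$ as an equality by analyzing the $p$-adic Newton polygon of the $L$-function of the exponential sum attached to $\bff = z_1 f_1 + \cdots + z_r f_r$. By the standard identity $\sum_{j} |V(\F_{q})| = q^{-r}\sum_{z\in\F_{q}^{r}}\sum_{x\in\F_{q}^{n}}\psi(\Tr(\bff(z,x)))$ plus inclusion--exclusion over which $x_i, z_j$ vanish, the $q$-order of $|\bar V(\F_q)|$ is governed by the smallest slope appearing among the $L$-functions of $\bff$ restricted to all coordinate subtori. Adolphson--Sperber's theorem \eqref{E:AS} tells us the first slope is $\geq \bmu$ on the main torus and is not smaller on any face; what I need is a \emph{matching upper bound} for generic coefficients and suitable $p$. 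So the first step is to reduce Theorem \ref{T:main} to the statement: for generic $\{a_{j,\bg}\}$ the Newton polygon of the $L$-function of $\bff$ on $(\bar\F_q^{*})^{n+r}$ has a vertex at abscissa equal to the number of lattice points at ``level $\bmu$'', i.e.\ $\bff$ is \emph{nondegenerate} \emph{and} the first slope of its Newton polygon equals its Hodge-polygon first slope $\bmu$.

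The second step is to compute the relevant first Hodge number. Let $C(\Delta)$ be the cone over $\Delta(\bff)$; for each rational $c>0$ let $W(c)$ be the number of lattice points $\bu\in\Z_{>0}^{n+r}$ in $c\cdot\Delta(\bff)$, equivalently with $\ord_\Delta(\bu)\le c$ where $\ord_\Delta$ is the cone weight function. By definition of $w(\bff)$, $W(c)=0$ for $c<w(\bff)$ and $W(w(\bff))\ge 1$. The number $\bc(\cG_1,\ldots,\cG_r)$ referenced in \eqref{E:conditional} should be (a nonzero multiple of) a certain determinant --- a ``Hasse--Witt''-type invariant --- built from the coefficients of the interior lattice points of $w(\bff)\cdot\Delta(\bff)$; its nonvanishing as a polynomial in the $a_{j,\bg}$ is exactly the genericity condition in part (1), and its being a \emph{nonzero integer} polynomial with controlled content is what upgrades to ``all large $p$'' in part (2). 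Concretely, I would set $\bc$ to be the resultant/discriminant-type expression whose reduction mod $p$ computes whether the Frobenius matrix on the lowest-weight graded piece of Dwork's cohomology is invertible.

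The third and technical step is the Dwork-theoretic estimate. Using the Dwork splitting function and the $p$-adic Banach-space trace formula (as in Adolphson--Sperber), one writes $L(\bff,T)^{\pm 1}$ as a ratio of Fredholm determinants of a completely continuous operator $\alpha$ on a space with basis indexed by $\Z_{\ge0}^{n+r}$, weighted by $\ord_\Delta$. The entries of $\alpha$ in the lowest-weight block are, up to a unit and a power of $p$, equal to the structure constants coming from the coefficients $a_{j,\bg}$; the Newton polygon's first slope is $\bmu$ precisely when the determinant of this finite block is a $p$-adic unit. One shows this determinant is $\equiv \bc(\cG_1,\ldots,\cG_r) \bmod p$ (up to an explicit nonzero constant), so: (a) for fixed generic coefficients over $\bar\Q$ it is nonzero, hence a unit for a positive-density set of $p$ (those not dividing its numerator/denominator, via Chebotarev or just the complement of a finite set after clearing denominators --- here ``positive density'' is automatic, in fact density one, but stated conservatively); (b) when the \emph{polynomial} $\bc$ is not identically zero \emph{and} its nonvanishing is witnessed by a combinatorial count coprime to $p$, it is a unit for every coefficient specialization once $p$ exceeds an effective bound. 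Combining with \eqref{E:AS} forces equality $\ord_q|\bar V(\F_q)| = \bmu$.

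The main obstacle I anticipate is twofold: first, isolating the correct lowest-weight block and proving its determinant genuinely computes $\bc$ --- this requires care with the Dwork exponential's $p$-adic expansion ($\exp(\pi(t-t^p))$ with $\pi^{p-1}=-p$) so that the leading term is a \emph{unit multiple of a universal polynomial} in the $a_{j,\bg}$ independent of $p$, rather than something that secretly degenerates for small $p$; second, the passage from ``generic over $\bar\Q$'' to ``positive density of $p$'' must handle the possibility that $\bc$, though a nonzero polynomial, has \emph{all} its nonzero coefficient-specializations landing in ideals $(p)$ for a density-zero set only --- this is where the integrality and the combinatorial interpretation of the conditional number do the work, ruling out that pathology in case (2) and making case (1) a clean application of the fact that a nonzero element of $\bar\Q$ is a unit at all but finitely many primes.
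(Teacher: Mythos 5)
Your skeleton (Dwork trace formula, isolation of a lowest-weight block, a Hasse-type quantity whose unit-ness mod $p$ forces equality with the Adolphson--Sperber bound) is the same as the paper's, but the step you explicitly defer --- that the lowest-weight block is ``a unit multiple of a universal polynomial in the $a_{j,\bg}$ independent of $p$'' --- is false in general, and working around its failure is precisely what produces the actual shape of statements (1) and (2). The relevant object in the paper is the polynomial $H_p(\A)$, a signed sum of traces of the minimal blocks $N_{B,C}(\A)$ over the pairs $(B,C)\in\cK$ attaining $\bmu=n-|B|-|C|+w_\Z(B,C)$; it genuinely depends on $p$: its monomials have exponents $\frac{p-1}{d}r_\bg$ coming from rational representations of the minimal pairs $(\bt,\bv)\in\cZ^{min}$ with denominators $d\in\cD$, and the Artin--Hasse coefficients $\delta_{u}$ are only $p$-adic units when $u\le p-1$, which forces the congruence conditions $p\equiv 1\bmod d$ for all $d\in\cD$. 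Consequently the good primes form a set of positive density via Dirichlet, not ``all but finitely many''/density one as you assert; your plan to view the determinant as one fixed nonzero element of $\bar\Q$ and invert it away from finitely many primes does not apply, because the quantity to be inverted changes with $p$. You also never address possible cancellation among the contributions of the distinct pairs $(B,C)\in\cK$ achieving the same minimal $q$-order (and, for $a>1$, among the off-diagonal products inside the trace); the paper needs a dedicated non-cancellation argument (Lemma \ref{L:Hp}) to conclude $H_p(\A)\not\equiv 0\bmod p$ before any genericity statement can be made.

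For part (2) your mechanism is off target. In the paper $\bc(\cG_1,\ldots,\cG_r)$ defined in (\ref{E:conditional}) is not a polynomial in the coefficients: it is an integer, a signed count of rational representations of minimal pairs, and the conclusion holds for \emph{every} $V$ in $\bbA^{\cG_1,\ldots,\cG_r}(\bar\Q)$, not merely generic $V$. The reason is structural: under the hypothesis $\cD=\{1\}$ each monomial of $H_p$ is a product of exact $(p-1)$-st powers of the $A_{j,\bg}$ with coefficient $1/(p-1)!$, so by Fermat's little theorem and Wilson's theorem $H_p(\hat\ba)\equiv(-1)^{n-\bmu}\,\bc(\cG_1,\ldots,\cG_r)\bmod p$ for every unit specialization $\hat\ba$ --- the dependence on the coefficients disappears identically mod $p$ (Proposition \ref{P:conditional}). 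Your proposed criterion, a ``nonzero integer polynomial with controlled content,'' would at best give the conclusion for coefficients outside a hypersurface, i.e.\ again only generically, and gives no reason why the unit-ness should hold at every specialization. So both halves of the theorem hinge on the specific arithmetic of the minimal-weight coefficients (the congruences imposed by $\cD$, the bound $u''_\bg\le p-1$ making $\delta_{u''_\bg}$ a unit, and the Wilson collapse when $\cD=\{1\}$), which is exactly the content your sketch labels ``the main obstacle'' and leaves unresolved.
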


This theorem shows that the Adolphson-Sperber $p$-divisibility bound
$\bmu$ is {\em asymptotically generically} sharp in the sense it can 
be achieved for a generic $V$ over $\bar\Q$ at infinitely many special fibers.
But only when the support set $(\cG_1,\ldots,\cG_r)$ satisfies certain condition
can a generic $V$ over $\Q$ 
achieve the $p$-divisibility bound for all but finitely many $p$.

\begin{examples}\label{Ex:1}
Suppose 
$V(f)$ is any hypersurface defined by 
$f=\sum_{\bg\in\cG} a_\bg \bx^\bg$
in $\Q[x_1,\ldots,x_n]$ 
where $a_\bg\in\Q^*$ and $\cG$ consists of all 
$\bg=(g_1,\ldots,g_n)\in\Z_{\geq 0}^n$ with $|\bg|:=\sum_{i=1}^{n} g_i=d$.
Let $n\geq d$. Then Theorem \ref{T:main} implies that
$
\ord_p|\bar{V}(\F_p)|\geq  \lceil{\frac{n-d}{d}}\rceil,
$
the same as the Ax-Katz bound.
A consequence of Theorem \ref{T:main} (1) is that  
for all generic such $V(f)$ over $\Q$ 
we have 
$
\ord_p|\bar{V}(\F_p)|=\lceil \frac{n-d}{d}\rceil
$
for primes $p$ in a set of positive density in $\Spec(\Z)$.
In comparison, Katz showed in 
\cite[Section 5]{Kat71}  there is an explicit algebraic surface
over $\F_p$ for every prime $p$ that his bound is achieved. 
\end{examples}

\begin{examples}\label{Ex:2}
Suppose $V(f)$ is any hypersurface with 
$f=a_1 x_1^3 x_2^3+ a_2 x_2^2 x_3^2$ where $a_1,a_2\in\Z-\{0\}$.
One can check that $|\bar{V}(\F_p)|=p(2p-1)$ for all $p$, hence 
$\ord_p|\bar{V}(\F_p)|=1$.
By Theorem \ref{T:main}(2) 
we have $\ord_p(|\bar{V}(\F_p)|)\ge 1$ 
and the equality holds for all $V(f)$ and at all prime $p$ large enough.
In comparison, Ax-Katz bound says that 
$\ord_p(|\bar{V}(\F_p)|)\geq 0$ which is weaker in this example.
\end{examples}

Our proof uses Dwork method. We first briefly recall necessary $p$-adic theory to study the 
number of rational points $|\bar{V}(\F_q)|$ of algebraic variety $\bar{V}$ over $\F_q$
in Section \ref{S:Dwork}, then we prepare some $\A$-polynomials where variables $\A$ parametrize
the coefficients of defining polynomials of $V$ over $\bar\Q$ in 
Section \ref{S:Hasse}. This section is technical and combinatorial.   
We start to prove our theorem for algebraic variety $V$ over $\Q$ and then 
reduce the general case $V$ over $\bar\Q$ to that over $\Q$ immediately.
Our proof of the main theorem \ref{T:main} lies in Section \ref{S:proof}. 

\begin{acknowledgments}
We thank Regis Blache and Kiran Kedlaya for very helpful comments on earlier versions of 
this paper. 
\end{acknowledgments}

\section{Rational points and the trace}
\label{S:Dwork}

For the rest of the paper we fix nonempty subsets 
$\cG_1,\ldots,\cG_r$ in $\Z_{\geq 0}^n$. 
They may or may not be distinct.
Let $\bar{f}_1,\ldots,\bar{f}_r$ be any polynomials in $\F_q[x_1,\ldots,x_n]$ with 
supporting coefficient sets 
$\cG_1,\ldots,\cG_r$ respectively.
That is, for each $j=1,\ldots,r$ one can write
$\bar{f}_j=\sum_{\bg\in \cG_j}\bar{a}_{j,\bg}\bx^\bg$ for $\bar{a}_{j,\bg}\in\F_q^*$.
Let $\bar{f}=z_1\bar{f}_1+\ldots+z_r\bar{f}_r\in \F_q[x_1,\ldots,x_n,z_1,\ldots,z_r]$.
Let $C$ be a nonempty subset of $\{1,\ldots,n\}$
and we write $\Z_{>0}^C$ (resp. $\Z_{\geq 0}^C$) for the subset of $\Z_{\ge 0}^n$ 
with $i$-th component in $\Z_{\geq 1}$ (resp. $\Z_{\geq 0}$) 
if $i\in C$ and equal to $0$ if $i\not\in C$.
Let $B$ be a nonempty subset of $\{1,\ldots,r\}$, and let 
$\Z^B_{>0}$ be defined similarly.
Let  $\cG_{j,C}:=\cG_j\cap\Z_{\geq 0}^C$.
Write $\bv=(v_i)_{i=1}^{n}$ and $\bt=(t_j)_{j=1}^{r}$.
Let 
\begin{eqnarray}\label{E:Z}
\cZ_{B,C} &:=&\bigg\{ (\bt,\bv)\in \Z^B_{>0}\times\Z^C_{>0} \bigg|
\bv=\sum_{j\in B}\sum_{\bg\in \cG_{j,C}} u_\bg \bg, 
u_\bg\in\Q_{\geq 0},\\
&& t_j=\sum_{\bg\in \cG_{j,C}}u_\bg\in \Z_{\ge 1} \mbox{ for each $j\in B$}
\bigg\}.\nonumber
\end{eqnarray}
For every pair $(\bt,\bv)$ in $\cZ_{B,C}$
we write $\bz^{\bt}\bx^\bv=\prod_{j\in B}z_j^{t_j}\prod_{i\in C}x_i^{v_i}$.
Let 
\begin{eqnarray}\label{E:Q}
\cQ_{B,C} &:=&
\bigg\{
(\bu,\bv)\in
\Q_{\geq 0}^{\sum_{j\in B}|\cG_{j,C}|}\times\Z_{> 0}^{C}\bigg|\bv=\sum_{\bg}u_\bg \bg, u_\bg\in\Q_{\geq 0},
\nonumber\\
&&\bu=(u_\bg)_{\bg\in \cup_{j\in B} \cG_{j,C}}, 
\sum_{\bg\in\cG_{j,C}}u_\bg \in\Z_{\ge 1} \mbox{ for each $j\in B$} 
\bigg\}.
\end{eqnarray}
Write the subset of integral points in $\cQ_{B,C}$ by $(\cQ_{B,C})_\Z$.
Then we have the natural surjective map 
\begin{equation}\label{E:iota}
\iota: \cQ_{B,C}\twoheadrightarrow \cZ_{B,C}
\end{equation}
that sends all $(\bu,\bv)$ to $(\bt,\bv)$
with $t_j=\sum_{\bg\in\cG_{j,C}} u_\bg$.
Let $|\bt|:=\sum_{j=1}^{r}t_j$ and $|\bu|:=\sum_{j\in B}\sum_{\bg\in \cG_{j,C}}u_\bg$.
For $(\bu,\bv)$ in $\cQ_{B,C}$ and $(\bt,\bv)=\iota(\bu,\bv)$ in $\cZ_{B,C}$, we have
$|\bt|=|\bu|$.

Let $E_p(x)$ be the $p$-adic Artin-Hasse exponential function. 
We write $E_p(x)=\sum_{i=0}^{\infty}\delta_i x^i$ where
$\delta_i\in \Z_p\cap \Q$.
For $0\leq i\leq p-1$,
$
\delta_i =\frac{1}{i!}.
$
Let $\gamma$ be a root of $\log E_p(x)=\sum_{i=0}^{\infty}\frac{x^{p^i}}{p^i}$
with $\ord_p\gamma =\frac{1}{p-1}$ in $\bar\Q_p$.
Notice that 
\begin{eqnarray}\label{E:gamma-p}
\frac{\gamma^{p-1}}{p}\equiv -1 \bmod \gamma.
\end{eqnarray}

For each $(B,C)$ we define a $\Z_q[\gamma]$-algebra
\begin{equation}\label{E:H}
\cH_{B,C}:=\left\{\sum_{(\bt,\bv)\in \cZ_{B,C}}c_{\bt,\bv} \gamma^{|\bt|} \bz^{\bt} \bx^\bv
\bigg| c_{\bt,\bv}\in \Z_q \right\}.
\end{equation}
This is a subalgebra of $\Z_q[x_1,\ldots,x_n,\gamma z_1,\ldots,\gamma z_r]$.
Let $\bar{f}_{B,C}$ be the restriction of $\bar{f}$ for $j\in B$ and $i\in C$, namely,
$$\bar{f}_{B,C}=\sum_{j\in B}\sum_{\bg\in\cG_{j,C}}\bar{a}_{j,\bg} z_j\bx^\bg.$$ 
Let $\ha_{j,\bg}$ be the Teichm\"uller lifting of $\bar{a}_{j,\bg}$ to $\Z_q^*$,
then Dwork's splitting function of $\bar{f}_{B,C}$ is 
$
G_{\bar{f}_{B,C}}:=\prod_{j\in B}\prod_{\bg \in \cG_{j,C}} E_p(\gamma \ha_{j,\bg} z_j\bx^\bg)
$,
which lies in $\cH_{B,C}$. 
Write $\ba^{\bu}=\prod_{j\in B}\prod_{\bg\in \cG_{j,C}}a_{j,\bg}^{u_\bg}$
and write ${\hat\ba}^{\bu}$ for the corresponding Teichm\"uller lifting similarly.
Then its expansion is
\begin{eqnarray*}
G_{\bar{f}_{B,C}} &=& \sum_{(\bt,\bv)\in\cZ_{B,C}} G_{\bt,\bv}\gamma^{|\bt|} \bz^\bt \bx^\bv
\end{eqnarray*}
where its coefficients are given by 
\begin{eqnarray}
\label{E:G}
G_{\bt,\bv} &=& 
\sum_{(\bu,\bv)} (\prod_{\bu} \delta_{u_\bg}) \hat{\ba}^{\bu}
\end{eqnarray}
and the sum ranges over all 
$(\bu,\bv)\in (\iota^{-1}(\bt,\bv))\cap (\cQ_{B,C})_\Z$. 
Notice that $G_{\bt,\bv}=0$ if and only if 
$(\iota^{-1}(\bt,\bv))\cap (\cQ_{B,C})_\Z=\emptyset$. 

Define an operator $\psi_p$ on $\cH_{B,C}$ by
\begin{equation}\label{E:psi}
\psi_p\left(\sum_{(\bt,\bv)\in\cZ_{B,C}}c_{\bt,\bv} \gamma^{|\bt|} \bz^\bt \bx^\bv\right) 
= \sum_{(\bt,\bv)\in\cZ_{B,C}}c_{p\bt,p\bv} \gamma^{p|\bt|} \bz^\bt \bx^\bv.
\end{equation}
One can check that $\cH_{B,C}$ is closed under $\psi_p$.
Indeed, suppose 
$(\bt,\bv)\in\cZ_{B,C}$ and $(\bt,\bv)=(p\bt',p\bv')$ with $(\bt',\bv')$ with all corresponding entries of $\bt'$ and $\bv'$ in $\Z_{\geq 1}$, then $p\bv'=\bv=\sum u_\bg \bg$ and 
$pt_j'=t_j=\sum_{\bg\in \cG_{C,j}} u_\bg\in\Z_{\geq 1}$ with $u_g\in\Q_{\geq 0}$.
We have
$\bv'=\sum (u_\bg/p) \bg$ and 
$t_j'=\sum_{\bg\in\cG_{C,j}}(u_\bg/p)\in\Z_{\geq 1}$, so 
$(\bt',\bv')\in\cZ_{B,C}$ and hence $\cH_{B,C}$ is closed under the operator $\psi_p$.

Let $\tau$ be the Frobenius automorphism of $\Q_q(\gamma)$ over $\Q_p(\gamma)$
and its induced map on $\cH_{B,C}$ is
$\tau^{-1}(\sum c_{\bt,\bv}\gamma^{|\bt|}\bz^\bt \bx^\bv)=\sum \tau^{-1}(c_{\bt,\bv}\gamma^{|\bt|})\bz^\bt\bx^\bv$.
Let $\balpha_{\bar{f}_{B,C}}$ be the Dwork operator on $\cH_{B,C}$ defined by 
\begin{equation}
\balpha_{\bar{f}_{B,C}}:=\tau^{-1}\circ\psi_p\circ G_{\bar{f}_{B,C}}.
\end{equation}

Let $\cH^{(\ell)}_{B,C}$ denote the sub-algebra of 
$\cH_{B,C}$ with $|\bt|=\ell$, we have a decomposition
$\cH_{B,C}=\oplus_{\ell\in\Z_{\geq 0}}\cH^{(\ell)}_{B,C}$.
Note that 
$\psi_p(\sum c_{\bt,\bv}\gamma^\ell \bz^\bt\bx^\bv)
=\gamma^{(p-1)\ell}\sum c_{p\bt,p\bv}\gamma^\ell \bz^\bt\bx^\bv$.
Then $\psi_p(\cH_{B,C}) \subseteq \oplus_{\ell\in\Z_{\geq 0}}p^\ell \cH^{(\ell)}_{B,C}.$
Since $G_{\bar{f}_{B,C}}$ lies in $\cH_{B,C}$,
we have $G_{\bar{f}_{B,C}} \cdot \cH_{B,C}\subseteq\cH_{B,C}$. 
Then it follows
\begin{eqnarray*}
\balpha_{\bar{f}_{B,C}}(\cH_{B,C})\subseteq \oplus_{\ell\in\Z_{\geq 0}}p^\ell \cH^{(\ell)}_{B,C}.
\end{eqnarray*}
From now on we order elements $(\bt,\bv)$ in $\cZ_{B,C}$ so that $|\bt|$ is nondecreasing, and 
in this way $\cZ_{B,C}$ becomes a partially ordered set.
Choose the weighted monomial basis $\{\gamma^{|\bt|}\bz^\bt\bx^\bv\}$ for $\cH_{B,C}$ over $\Z_q$
with the pairs $(\bt,\bv)$ ranging in (the partially ordered set) $\cZ_{B,C}$.
Then 
\begin{equation}\label{E:DworkOperator}
\balpha_{\bar{f}_{B,C}}(\gamma^{|\bt'|}\bz^{\bt'} \bx^{\bv'})
=\sum_{(\bt,\bv)\in\cZ_{B,C}}(\tau^{-1}\gamma^{(p-1)|\bt|}G_{p\bt-\bt',p\bv-\bv'})\gamma^{|\bt|}\bz^{\bt}\bx^{\bv},
\end{equation}
where $G_{-,-}$ is as defined in (\ref{E:G}).
Write the infinite matrix 
\begin{eqnarray}\label{E:M}
M_{B,C}&:=& \left(\gamma^{(p-1)|\bt|}
G_{p\bt-\bt', p\bv-\bv'}\right)_{(\bt',\bv'), (\bt,\bv)}
\end{eqnarray}
where the row and column are indexed by the pairs $(\bt',\bv')$ and $(\bt,\bv)$ in $\cZ_{B,C}$, respectively. 
This matrix lies over $\Q_q[\gamma]$.
Then the matrix of $\balpha_{\bar{f}_{B,C}}$ with respect to the above weighted monomial basis 
is 
$\Mat(\balpha_{\bar{f}_{B,C}})=\tau^{-1} M_{B,C}$.
For any matrix $M$ we write 
\begin{equation}\label{E:Ma}
M^{[a]}:=M^{\tau^{a-1}}\cdots M^{\tau} M.
\end{equation}
For $a$ compositions of the Dwork operator
$\balpha_{\bar{f}_{B,C}}^a=\balpha_{\bar{f}_{B,C}}\circ \cdots \circ \balpha_{\bar{f}_{B,C}}$,
we have
$$
\Mat(\balpha_{\bar{f}_{B,C}}^a) = M_{B,C}^{[a]}.
$$

\begin{theorem}
\label{T:Dwork}
Let $\bar{V}$ be algebraic variety defined
by the polynomials $\bar{f}_1,\ldots, \bar{f}_r\in \F_q[x_1,\ldots,x_n]$ with $q=p^a$.
For any nonempty subsets $B,C$ in $\{1,\ldots,r\}$ and $\{1,\ldots,n\}$ respectively let
$M_{B,C}$ be the nuclear matrix defined in (\ref{E:M}).
Then 
\begin{eqnarray}\label{E:dwork2}
|\bar{V}(\F_q)| &=& q^n+\sum_{B,C}(q-1)^{|B|+|C|} q^{n-|B|-|C|}\Tr(M_{B,C}^{[a]}).
\end{eqnarray}
\end{theorem}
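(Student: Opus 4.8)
The plan is to derive (\ref{E:dwork2}), which is essentially Dwork's $p$-adic trace formula in this set-up, from an additive-character count followed by a torus stratification. Fix a nontrivial additive character $\Psi\colon\F_p\to\bar\Q_p^{*}$ with $\Psi(1)=E_p(\gamma)$ and set $\Theta=\Psi\circ\Tr_{\F_q/\F_p}$. Since $\frac{1}{q}\sum_{c\in\F_q}\Theta(c\lambda)$ is $1$ when $\lambda=0$ and $0$ otherwise, taking $\lambda=\bar f_j(\bx)$ for each $j$ gives $q^{r}|\bar V(\F_q)|=\sum_{\bx\in\F_q^{n}}\sum_{\bz\in\F_q^{r}}\Theta(\bar f(\bx,\bz))$ with $\bar f=z_1\bar f_1+\cdots+z_r\bar f_r$. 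Stratifying $\F_q^{n}\times\F_q^{r}$ by the supports $C\subseteq\{1,\dots,n\}$ and $B\subseteq\{1,\dots,r\}$ of $(\bx,\bz)$ (that is, $x_i\neq0\Leftrightarrow i\in C$ and $z_j\neq0\Leftrightarrow j\in B$), on the $(B,C)$-stratum $\bar f$ restricts to $\bar f_{B,C}$, so $q^{r}|\bar V(\F_q)|=\sum_{B,C}S^{*}_{B,C}$ where $S^{*}_{B,C}:=\sum_{\bx\in(\F_q^{*})^{C}}\sum_{\bz\in(\F_q^{*})^{B}}\Theta\bigl(\bar f_{B,C}(\bx,\bz)\bigr)$, the outer sum over $B,C$ running over all subsets, including the empty ones.

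For a fixed pair $(B,C)$ I would apply the classical Dwork trace formula to the torus sum $S^{*}_{B,C}$. Using the splitting relation $\Theta(\bar f_{B,C}(\hat\bx,\hat\bz))=\prod_{i=0}^{a-1}G_{\bar f_{B,C}}\bigl((\hat\bx,\hat\bz)^{p^{i}}\bigr)$ at Teichm\"uller points and then summing over $(\hat\bx,\hat\bz)$ ranging through $\mu_{q-1}^{C}\times\mu_{q-1}^{B}$, the orthogonality relations for the characters of $\mu_{q-1}$ collapse the resulting multiple sum onto the trace of the $a$-fold composite, giving
\[
S^{*}_{B,C}=(q-1)^{|B|+|C|}\,\Tr\bigl(\beta_{B,C}^{\,a}\bigr),
\]
where $\beta_{B,C}:=\tau^{-1}\circ\psi_p\circ G_{\bar f_{B,C}}$ is the Dwork operator acting on the \emph{full} completed monomial algebra $\cH'_{B,C}$ spanned by the $\gamma^{|\bt|}\bz^{\bt}\bx^{\bv}$ with $\bt$ supported on $B$, $\bv$ supported on $C$, all exponents \emph{non-negative}, and $(\bt,\bv)$ in the cone generated by the $\cG_{j,C}$; the trace is the nuclear trace, so $\Tr(\beta_{B,C}^{\,a})=\Tr(\Mat(\beta_{B,C}^{\,a}))$, and the normalizing factor $(q-1)^{|B|+|C|}$ is forced by the $\psi_p$-structure. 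One checks nuclearity of $\beta_{B,C}$ exactly as for $M_{B,C}$ in (\ref{E:M}).

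Next I would decompose this trace. Filter $\cH'_{B,C}$ by the pair $(B',C')$, $B'\subseteq B$ and $C'\subseteq C$, of coordinates on which $\bt$, respectively $\bv$, is strictly positive. Because the restriction of $G_{\bar f_{B,C}}$ to the locus $\{z_j=0:j\notin B'\}\cup\{x_i=0:i\notin C'\}$ equals $G_{\bar f_{B',C'}}$, while $\psi_p$ and $\tau$ respect the filtration, $\beta_{B,C}$ is block upper-triangular and its diagonal block indexed by $(B',C')$ is precisely the operator $\balpha_{\bar f_{B',C'}}$ on $\cH_{B',C'}$, whose index set is $\cZ_{B',C'}$ and whose matrix is $M_{B',C'}$. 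Hence $\Tr(\beta_{B,C}^{\,a})=\sum_{B'\subseteq B,\,C'\subseteq C}\Tr(M_{B',C'}^{[a]})$. Substituting this into $q^{r}|\bar V(\F_q)|=\sum_{B,C}S^{*}_{B,C}$ and interchanging the order of summation,
\[
q^{r}|\bar V(\F_q)|=\sum_{B',C'}\Tr(M_{B',C'}^{[a]})\Bigl(\sum_{B\supseteq B'}(q-1)^{|B|}\Bigr)\Bigl(\sum_{C\supseteq C'}(q-1)^{|C|}\Bigr),
\]
and since $\sum_{B\supseteq B'}(q-1)^{|B|}=(q-1)^{|B'|}q^{\,r-|B'|}$ and likewise for $C'$, dividing by $q^{r}$ gives $|\bar V(\F_q)|=\sum_{B',C'}(q-1)^{|B'|+|C'|}q^{\,n-|B'|-|C'|}\Tr(M_{B',C'}^{[a]})$. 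The term with $B'=C'=\emptyset$ equals $q^{n}$ (there $\cZ_{\emptyset,\emptyset}=\{(\0,\0)\}$ and $M_{\emptyset,\emptyset}=(1)$, while $M_{\emptyset,C}=0$ for $C\neq\emptyset$), and isolating it yields exactly (\ref{E:dwork2}).

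The main obstacle is the Dwork trace formula in the second step: establishing precisely the normalizing factor $(q-1)^{|B|+|C|}$, threading the Frobenius twist $\tau$ through the $a$-fold iterate so that the trace appearing is $\Tr(M_{B,C}^{[a]})$ in the sense of (\ref{E:Ma}) rather than $\Tr(M_{B,C})^{a}$, and supplying the $p$-adic convergence and nuclearity estimates that make all of these traces meaningful. Everything else is combinatorial bookkeeping with the two families of subsets, the auxiliary variables $\bz$, and the binomial identity used in the last step.
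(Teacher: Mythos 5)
Your route is the same one the paper intends: its proof is just a pointer to the ``standard counting argument'' of Katz (3.5.4) together with nuclearity from Dwork--Serre, and what you have written out (orthogonality of the additive character, stratification of $\F_q^n\times\F_q^r$ by the supports $(B,C)$, the classical torus trace formula with the factor $(q-1)^{|B|+|C|}$, block-triangularity with respect to support inclusion so that the trace localizes to the diagonal blocks $M_{B',C'}$, and the resummation $\sum_{B\supseteq B'}(q-1)^{|B|}=(q-1)^{|B'|}q^{r-|B'|}$) is exactly that argument, filled in correctly; the twisted iterate $M^{[a]}$ and the $(q-1)^{|B|+|C|}$ normalization are handled as in the classical formula you invoke, on the same footing as the paper's own citation.

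There is one concrete bookkeeping gap at the very end. Your penultimate identity runs over \emph{all} pairs $(B',C')$, and you only dispose of the terms with $B'=\emptyset$: you note $\cZ_{\emptyset,\emptyset}=\{(\0,\0)\}$ gives $q^n$ and $M_{\emptyset,C'}=0$ for $C'\neq\emptyset$, but you never mention the strata with $B'\neq\emptyset$ and $C'=\emptyset$. These contribute $(q-1)^{|B'|}q^{n-|B'|}\Tr(M_{B',\emptyset}^{[a]})$, and they vanish precisely because $\cG_{j,\emptyset}=\cG_j\cap\{\0\}=\emptyset$ in the paper's setting (no constant monomial in any $\cG_j$), which forces $\cZ_{B',\emptyset}=\emptyset$ since the condition $t_j=\sum_{\bg\in\cG_{j,\emptyset}}u_\bg\ge 1$ cannot be met. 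You should state this; it is not automatic, and it is exactly where the hypothesis on the supports enters: if some $f_j$ carried a constant term (e.g.\ $r=n=1$, $f=x+1$, where the stratum $x=0$, $z\neq 0$ contributes a nonzero Gauss-type term), these blocks are genuinely nonvanishing and the right-hand side of (\ref{E:dwork2}) would have to include them. With that one line added, your derivation matches the paper's statement.
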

\begin{proof}
We have already shown above that 
$\balpha_{\bar{f}_{B,C}}$ is a nuclear operator on $\cH_{B,C}$
(see \cite{Dwo60} or \cite{Ser62}).
Our statement follows the same standard counting argument as that for 
 (3.5.4) in \cite{Kat71}.
\end{proof}

For a nuclear matrix $M$ over a $p$-adic valuation ring $R$,
we write $\ord_pM$ for the minimum $p$-adic order of all entries of $M$.

\begin{lemma}\label{L:dominate}
Let $q=p^a$ and $k\in \Z_{\ge 0}$.
Let $M$ be a nuclear matrix over a $p$-adic valuation ring 
of the block form
$$\frac{M}{p^k}=
\left(
\begin{matrix}
M_{11}  & M_{12}\\
p^{>0}M_{21} & p^{>0}M_{22}
\end{matrix}
\right)
 + (p^{>0})
$$
where $M_{11}$ is a square submatrix, $M_{ij}$ are all submatrix such that
$\ord_p M_{ij}\geq 0$.
Let $M^{[a]}$ be as defined in (\ref{E:Ma}).
Then we have
$
\frac{\Tr(M^{[a]})}{q^{k}} \equiv \Tr(M_{11}^{[a]})\bmod (p^{>0}).
$
In particular, if $\ord_q\Tr(M_{11}^{[a]})=0$ then 
$\ord_q \Tr(M^{[a]})=k$.
\end{lemma}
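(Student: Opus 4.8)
The plan is to reduce the claim to a straightforward block-matrix computation for the twisted product $M^{[a]}$. First I would write $M = p^k N$ where, by hypothesis, $N$ has the block decomposition
\[
N = \begin{pmatrix} M_{11} & M_{12} \\ 0 & 0 \end{pmatrix} + \begin{pmatrix} 0 & 0 \\ p^{>0}M_{21} & p^{>0}M_{22} \end{pmatrix} + (p^{>0}),
\]
so that modulo $(p^{>0})$ the matrix $N$ is block upper-triangular with the lower block-row zero, and its top-left block is $M_{11} \bmod (p^{>0})$. Since all the entries of all $M_{ij}$ are in the valuation ring, $N$ is integral, and applying a Frobenius twist $\tau$ preserves this block shape (it acts entrywise and fixes $p$-adic orders), so $N^\tau$ has the same structure, with top-left block $M_{11}^\tau \bmod (p^{>0})$.

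Next I would compute $M^{[a]} = p^{ka} N^{[a]} = q^k N^{[a]}$, so the statement reduces to $\Tr(N^{[a]}) \equiv \Tr(M_{11}^{[a]}) \bmod (p^{>0})$. For this I would expand $N^{[a]} = N^{\tau^{a-1}} \cdots N^\tau N$ and observe that each factor $N^{\tau^i}$, modulo $(p^{>0})$, is block upper-triangular with zero lower block-row. A product of such matrices is again block upper-triangular with zero lower block-row, and its top-left block is the product of the top-left blocks; hence modulo $(p^{>0})$ we get
\[
N^{[a]} \equiv \begin{pmatrix} M_{11}^{\tau^{a-1}}\cdots M_{11}^\tau M_{11} & * \\ 0 & 0 \end{pmatrix} = \begin{pmatrix} M_{11}^{[a]} & * \\ 0 & 0 \end{pmatrix} \bmod (p^{>0}).
\]
Taking the trace kills the off-diagonal block and the zero block, leaving $\Tr(N^{[a]}) \equiv \Tr(M_{11}^{[a]}) \bmod (p^{>0})$, which gives the first assertion after multiplying back by $q^k$. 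The ``in particular'' then follows: if $\ord_q \Tr(M_{11}^{[a]}) = 0$, i.e. $\Tr(M_{11}^{[a]})$ is a $p$-adic unit, then $\Tr(M^{[a]})/q^k$ is a unit, so $\ord_q \Tr(M^{[a]}) = k$.

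The one technical point requiring care is that $M$ is a \emph{nuclear} (infinite) matrix, so I should justify that the trace and the twisted product are well-defined and that the block decomposition is compatible with the nuclear structure: the ``$(p^{>0})$'' error terms must be understood in the sense of entrywise $p$-adic order tending to infinity appropriately, and one must check that multiplying infinitely many congruences does not destroy the mod-$(p^{>0})$ estimate. In practice this is handled by the standard convergence properties of nuclear operators already invoked in the proof of Theorem~\ref{T:Dwork}, so the only genuine obstacle is bookkeeping: ensuring the finite-block truncation argument passes to the limit. Everything else is the elementary observation that block upper-triangular matrices with a vanishing block-row are closed under (twisted) products and contribute only their top-left block to the trace.
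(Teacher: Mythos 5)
Your proposal is correct and is essentially the paper's own argument: factor $p^k$ out of each twisted factor, note that modulo $(p^{>0})$ each factor (and hence the twisted product) has the block form $\begin{pmatrix} M_{11}^{[a]} & \star \\ 0 & 0 \end{pmatrix}$, and take traces. The extra remarks about nuclearity and Frobenius preserving $p$-adic orders are fine but not needed beyond what the paper already assumes.
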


\begin{proof}
Notice that 
\begin{eqnarray*}
\frac{M^{[a]}}{p^{ak}}
&=& \frac{M^{\tau^{a-1}}}{p^k}\cdots \frac{M^\tau}{p^k} \frac{M}{p^k} 
\equiv 
\begin{pmatrix}
M_{11}^{[a]} & \star\\
0 & 0
\end{pmatrix}
\bmod (p^{>0})
\end{eqnarray*}
where $\ord_p\star\geq 0$.
Thus 
\begin{eqnarray*}
\frac{M^{[a]}}{q^{k}}\equiv 
\begin{pmatrix}
M_{11}^{[a]} & \star\\
0 & 0
\end{pmatrix}
\bmod (p^{>0}).
\end{eqnarray*}
Taking trace on both sides, we get
$
\frac{\Tr(M^{[a]})}{q^{k}} \equiv \Tr(M_{11}^{[a]})\bmod (p^{>0})
$
which proves our statement. 
\end{proof}

As the nuclear matrix $M_{B,C}$ has 
its entries as polynomials in coefficients $\bar\ba=(\bar{a}_{j,\bg})$ 
of  the defining polynomials $\bar{f}_j=\sum_{\bg\in\cG_j}\bar{a}_{j,\bg} \bx^\bg$,
we shall {\em deform} each entry to polynomials in variables $\A:=(A_{j,\bg})$.
Subsequently, the trace of $M_{B,C}$ is also {\em deformed} to 
a polynomial in $\A$. 
This idea is pronounced in the following Section \ref{S:Hasse}.

\section{$\A$-deformations and $\A$-polynomials}
\label{S:Hasse}

The sets $\cG_1,\ldots,\cG_r$ in $\Z_{\geq 0}^n$ are fixed.
Recall that $B,C$ are nonempty subsets in $\{1,\ldots,r\}$ 
and $\{1,\ldots,n\}$, respectively. 
Define an integral weight of each pair $(B,C)$ by 
\begin{eqnarray}\label{E:integral}
w_\Z(B,C)
:=
\min_{\bv\in\Z_{>0}^n}
\bigg\{
\sum_{j\in B}\sum_{\bg\in \cG_{j,C}}u_\bg
\bigg| 
\bv=\sum_{j\in B}\sum_{\bg\in \cG_{j,C}}u_\bg \bg, \mbox{ with }\\
\quad u_\bg\in\Q_{\geq 0}, 
\sum_{\bg\in\cG_{j,C}}u_\bg \in\Z_{\ge 1} \mbox{ for every $j\in B$}
\bigg\}.
\nonumber
\end{eqnarray}
If no such representation of $\bv$ described in (\ref{E:integral})
exists then assign $w_\Z(B,C)=\infty$.
Otherwise $|B|\leq w_\Z(B,C)\leq r$ where $|B|$ denotes the cardinality of $B$.
From the definition $\cZ_{B,C}$ from (\ref{E:Z}) 
one observes clearly
$w_\Z(B,C)=\min\{|\bt| \big| (\bt,\bv)\in\cZ_{B,C}\}$. 
Let $\cZ^{min}_{B,C}$ be the (finite) subset of $\cZ_{B,C}$ 
consisting of all $(\bt,\bv)$ in 
that the minimal bound $w_\Z(B,C)$ is realized, that is $|\bt|=w_\Z(B,C)$.
We can show by combinatorics that 
\begin{eqnarray}\label{E:b}
\bmu &=&\min_{B,C}\bigg\{n-|B|-|C|+w_{\Z}(B,C)\bigg\}.
\end{eqnarray}
Let $\cK$ be the (nonempty) set of all $(B,C)$ with $n-|B|-|C|+w_\Z(B,C)=\bmu$. 
By this definition one observes that $\cZ^{min}_{B,C}\neq \emptyset$  for each $(B,C)\in \cK$.
Hence 
\begin{eqnarray}\label{E:Zmin}
\cZ^{min} &:=&\bigcup_{(B,C)\in\cK} \cZ^{min}_{B,C}
\end{eqnarray}
is a nonempty
set consisting of all $(\bt,\bv)\in \Z_{>0}^B\times \Z_{>0}^C$ with
$|\bt|= \bmu-n+|B|+|C|=w_\Z(B,C)$.

Write $\A=(A_{j,\bg})_{j\in B,\bg\in \cG_{j,C}}$ for variables.
Let $G_{\bt,\bv}(\A)$ be the polynomial 
obtained via replacing each $\hat{\ba}$ in $G_{\bt,\bv}$ in (\ref{E:G})
by variables $\A$. 
\begin{equation}\label{E:polynomia-G}
G_{\bt,\bv}(\A):=\sum_{(\bu,\bv)} \prod_{\bu} (\prod_{\bg}\delta_{u_\bg})\A^\bu
\end{equation}
where $(\bu,\bv)\in (\iota^{-1}(\bt,\bv))\cap (\cQ_{B,C})_\Z$. 
Since all $\delta_i\in\Z_p\cap \Q$ for all $i$ we have
$G_{\bt,\bv}(\A)$ lies in $(\Z_p\cap \Q)[\A]$. 
For any $(\bt',\bv')$ and $(\bt,\bv)$ in $\cZ_{B,C}$ we have
\begin{equation}\label{E:G2}
G_{p\bt-\bt',p\bv-\bv'}(\A)=\sum_{(\bu'',p\bv-\bv')} \prod_{\bu''}(\prod_{\bg}\delta_{u''_\bg}) \A^{\bu''}
\end{equation}
where the sum is over all $(\bu'',p\bv-\bv')\in
(\iota^{-1}(p\bt-\bt',p\bv-\bv'))\cap (\cQ_{B,C})_\Z$. 
That is, $u''_\bg\in\Z_{\ge 0}$ and
$
\begin{array}{lll}
p\bv-\bv' &=& \sum_{\bg} u''_\bg \bg.
\end{array}
$
Let $M_{B,C}(\A)$ be the paramaterized nuclear matrix over $(\Z_p\cap\Q)[\gamma][\A]$ deforming
$M_{B,C}$ in (\ref{E:M})
\begin{equation}\label{E:MA}
M_{B,C}(\A):= \left(\gamma^{(p-1)|\bt|} G_{p\bt-\bt',p\bv-\bv'}(\A)\right)_{(\bt',\bv'),(\bt,\bv)},
\end{equation}
where the subindices range over $\cZ_{B,C}$.
Let
$$N_{B,C}(\A):=\left(G_{p\bt-\bt',p\bv-\bv'}(\A)\right)_{(\bt',\bv'),(\bt,\bv)}
$$
where $(\bt,\bv)$ and $(\bt',\bv')$ 
lie in $\cZ^{min}_{B,C}$. 
Then by (\ref{E:gamma-p}) and the definitions,
\begin{equation}\label{E:matrix}
(-1)^{w_\Z(B,C)} \frac{M_{B,C}(\A)}{p^{w_\Z(B,C)}}=
\begin{pmatrix}
N_{B,C}(\A) & M_{12}\\
p^{>0}M_{21} & p^{>0}M_{22}
\end{pmatrix}
+(p^{>0})
\end{equation}
for some submatrices $M_{12}, M_{21}, M_{22}$ all with $\ord_p(M_{ij})\geq 0$.

For any matrix $M(\A)$ over $K[\A]$ where $K$ is any field with $\tau$-action,
we define $\tau(\A):=\A^p$ and 
\begin{equation}\label{E:MaA}
M(\A)^{[a]}: = M(\A)^{\tau^{a-1}}\cdots M(\A)=M^{\tau^{a-1}}(\A^{p^{a-1}})\cdots M(\A).
\end{equation}

\begin{lemma}\label{L:dominate2}
Let $M_{B,C}(\A)^{[a]}$ and $N_{B,C}(\A)^{[a]}$
be defined as in (\ref{E:MaA}).
Then we have
$$
\frac{\Tr(M_{B,C}(\A)^{[a]})}{q^{w_\Z(B,C)}}\equiv 
(-1)^{a w_\Z(B,C)}\Tr(N_{B,C}(\A)^{[a]})\bmod p^{>0}.
$$
\end{lemma}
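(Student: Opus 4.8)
The plan is to deduce this directly from Lemma \ref{L:dominate}, applied not to $M_{B,C}(\A)$ itself but to its sign-twist $(-1)^{w_\Z(B,C)}M_{B,C}(\A)$, with (\ref{E:matrix}) furnishing the block shape demanded by that lemma; the only genuine work is to keep track of the sign that appears when one passes from a matrix $M$ to its $a$-fold Frobenius-twisted product $M^{[a]}$.

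Set $w:=w_\Z(B,C)$ and $\epsilon:=(-1)^{w}\in\{\pm1\}$. Because $\epsilon$ is an integer, it is fixed by $\tau$, so $(\epsilon\,M_{B,C}(\A))^{[a]}=\epsilon^{a}M_{B,C}(\A)^{[a]}$ with $\epsilon^{a}=(-1)^{aw}$. By (\ref{E:matrix}), the matrix $\epsilon\,M_{B,C}(\A)/p^{w}$ has exactly the block form required in the hypothesis of Lemma \ref{L:dominate}, with leading square block $N_{B,C}(\A)$; indeed the leading block is the part of $M_{B,C}(\A)$ indexed by $\cZ^{min}_{B,C}$, which, under the chosen ordering of $\cZ_{B,C}$ by nondecreasing $|\bt|$, consists precisely of the rows and columns with $|\bt|=w$, and (\ref{E:matrix}) further records that $N_{B,C}(\A),M_{12},M_{21},M_{22}$ all have nonnegative $p$-adic order. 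I would then apply Lemma \ref{L:dominate} with $M=\epsilon\,M_{B,C}(\A)$, $k=w$, $M_{11}=N_{B,C}(\A)$; here one should note that its proof is a purely formal block-triangular matrix computation and so goes through verbatim over the ring $(\Z_p\cap\Q)[\gamma][\A]$ equipped with the $\tau$-twisted product of (\ref{E:MaA}) and with the congruence $\bmod p^{>0}$ read coefficient-wise, the two points to check being that the index subset $\cZ^{min}_{B,C}$ is stable under $\tau$ (it is cut out by $|\bt|=w$, a condition unaffected by $\A\mapsto\A^{p}$) and that $\tau$ preserves $p$-adic orders of coefficients (being the Frobenius over $\Q_p$, it fixes $\Z_p\cap\Q$). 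This produces
\[\frac{\Tr\bigl((\epsilon\,M_{B,C}(\A))^{[a]}\bigr)}{q^{w}}\equiv\Tr\bigl(N_{B,C}(\A)^{[a]}\bigr)\bmod p^{>0}.\]

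Finally I would substitute $(\epsilon\,M_{B,C}(\A))^{[a]}=\epsilon^{a}M_{B,C}(\A)^{[a]}=(-1)^{aw}M_{B,C}(\A)^{[a]}$, pull the scalar $(-1)^{aw}$ outside the trace, and multiply both sides of the congruence by $(-1)^{aw}$ (a unit whose square is $1$), which gives
\[\frac{\Tr(M_{B,C}(\A)^{[a]})}{q^{w_\Z(B,C)}}\equiv(-1)^{aw_\Z(B,C)}\Tr(N_{B,C}(\A)^{[a]})\bmod p^{>0},\]
as claimed. I expect the main obstacle to be bookkeeping rather than depth: one must be sure Lemma \ref{L:dominate} is legitimately invoked over the polynomial ring with its twisted multiplication, so that both the block-triangularity and the congruence modulo $p^{>0}$ are genuinely $\tau$-stable, and one must not drop the dependence on $a$ in the sign, since the $a$-fold twisted product turns the scalar $(-1)^{w}$ into $(-1)^{aw}$ and not $(-1)^{w}$. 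If one preferred to avoid re-examining the proof of Lemma \ref{L:dominate} in the polynomial setting, a more hands-on route would be to argue entrywise, but the formal version above is the cleanest since the target is already an identity of $\A$-polynomials modulo $p^{>0}$.
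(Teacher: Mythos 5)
Your proposal is correct and follows essentially the same route as the paper: both invoke Lemma \ref{L:dominate} with the block decomposition (\ref{E:matrix}) for $k=w_\Z(B,C)$, the only cosmetic difference being that you twist $M_{B,C}(\A)$ by $(-1)^{w_\Z(B,C)}$ while the paper instead takes $M_{11}:=(-1)^{w_\Z(B,C)}N_{B,C}(\A)$, and in either placement the $a$-fold twisted product produces the same sign $(-1)^{aw_\Z(B,C)}$. Your added checks that the argument is stable under $\tau$ and valid over $(\Z_p\cap\Q)[\gamma][\A]$ are sound and merely make explicit what the paper leaves implicit.
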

\begin{proof}
By (\ref{E:matrix}), 
we apply Lemma \ref{L:dominate}
to the matrix $M_{11}:=(-1)^{w_\Z(B,C)}N_{B,C}(\A)$
and $M:=M_{B,C}(\A)$.
\end{proof}

For any $p$-adic valuation ring $R$ and polynomial $F(\A)\in R[\A]$ 
let $\ord_p(F(\A))$ be the minimum of the $p$-adic orders
of all coefficients of $F(\A)$. 
Since $|\bt|\geq w_\Z(B,C)$ for all $(\bt,\bv)\in\cZ_{B,C}$, 
we have 
\begin{eqnarray}\label{E:trace-bound2}
\ord_q \Tr(M_{B,C}(\A)^{[a]})\geq \ord_q M_{B,C}(\A)^{[a]} \geq w_\Z(B,C).
\end{eqnarray}

\begin{definition}\label{D:representation}
Let $(\bt,\bv)\in\cZ_{B,C}$ where $\cZ_{B,C}$ is defined as in (\ref{E:Z}).
Consider the set of all {\em rational representations} of $(\bt,\bv)$ 
in terms of $\cG_1,\ldots,\cG_r$, namely,
$\bv=\frac{1}{d}\sum_{j\in B}\sum_{\bg\in\cG_{j,C}} r_\bg \bg$ 
with $d,r_\bg\in \Z_{\geq 0}$ and all $r_\bg$'s are coprime
such that  
$\frac{1}{d}\sum_{\bg\in \cG_{j,C}}r_\bg=t_j\in\Z_{\geq 1}$ for all $j\in B$ and 
$|\bt|=\sum_{j\in B} t_j=w_\Z(B,C)$. 
\end{definition}

Let $\cD$ be the  nonempty finite set of all denominators $d$ of rational representations 
of all $(\bt,\bv)\in\cZ^{min}$.

\begin{lemma}\label{L:key}
(1) For each $(\bt,\bv)\in \cZ_{B,C}\cap\iota((\cQ_{B,C})_\Z)$, 
there is 1-1 correspondence between 
a term with monomial $\prod_{\bg}A_\bg^{u_\bg}$ in the expansion of $G_{\bt,\bv}(\A)$ 
as in (\ref{E:polynomia-G})
and 
$(\bu,\bv)\in \iota^{-1}(\bt,\bv)\cap (\cQ_{B,C})_\Z$ such that $\bv=\sum_\bg u_\bg \bg$.
In particular $\sum_{\bg}u_\bg=|\bt|$.

(2) Let $(\bt,\bv)\in \cZ^{min}$. For each prime $p$,
there is a 1-1 correspondence between 
a nonzero term with monomial $\prod_{\bg}A_\bg^{u''_\bg}$ in 
the expansion of $G_{(p-1)\bt,(p-1)\bv}(\A)$
and
a rational representation of $(\bt,\bv)$
$$\bv=\frac{1}{d}\sum_{j\in B}\sum_{\bg\in\cG_{j,C}}r_\bg \bg$$
with all $d|(p-1)$,
in which $u''_\bg=\frac{(p-1)}{d}r_\bg\leq p-1$ and $\sum_{\bg} u''_\bg = (p-1)w_\Z(B,C).$

For $(\bt,\bv)\in\cZ^{min}$, we have
\begin{eqnarray*}
G_{(p-1)\bt,(p-1)\bv}(\A) 
&=&
\sum_{(\bu'',(p-1)\bv)} (\prod_{\bg}\delta_{u''_\bg})\A^{\bu''}\\\nonumber
&=&
\sum_{d\in\cD,d|(p-1)}\sum_{(\bu,\bv)}
\left(\prod_\bg \delta_{u''_\bg}\right)
\left(\prod_{\bg}A_{j,\bg}^{u''_\bg}\right)
\end{eqnarray*}
where the last sum ranges over all 
rational representations of $(\bt,\bv)$ with denominator $d|(p-1)$.
Each term has coefficient in $\Z_p^*$ and $A_{j,\bg}$-degree $\leq p-1$. 
\end{lemma}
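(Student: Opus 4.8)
The plan is to derive all three assertions from the explicit formula (\ref{E:polynomia-G}) for $G_{\bt,\bv}(\A)$ together with two minimality arguments that exploit the defining property $|\bt|=w_\Z(B,C)$ of points of $\cZ^{min}$. Part~(1) is essentially a reading of the definition: by (\ref{E:polynomia-G}), $G_{\bt,\bv}(\A)=\sum(\prod_\bg\delta_{u_\bg})\A^\bu$, the sum running over all $(\bu,\bv)\in\iota^{-1}(\bt,\bv)\cap(\cQ_{B,C})_\Z$, and since the exponent vector of $\A^\bu$ is $\bu$ itself, distinct index points contribute distinct monomials, so the formal summands of $G_{\bt,\bv}(\A)$ are in bijection with those index points. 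The relation $\sum_\bg u_\bg=|\bt|$ is built into the definition (\ref{E:Q}) of $\cQ_{B,C}$, where $\sum_{\bg\in\cG_{j,C}}u_\bg=t_j$ for every $j\in B$; summing over $j\in B$ gives $|\bu|=|\bt|$, as already observed after (\ref{E:iota}).

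For Part~(2) I would first note that $((p-1)\bt,(p-1)\bv)$ again lies in $\cZ_{B,C}$ (scale any $\Q_{\ge 0}$-representation of $(\bt,\bv)$ by $p-1$), so Part~(1) identifies the terms of $G_{(p-1)\bt,(p-1)\bv}(\A)$ with the vectors $\bu''$ of nonnegative integers satisfying $(p-1)\bv=\sum_\bg u''_\bg\bg$, $\sum_{\bg\in\cG_{j,C}}u''_\bg=(p-1)t_j$ for each $j\in B$, and hence $\sum_\bg u''_\bg=(p-1)|\bt|=(p-1)w_\Z(B,C)$. The first key point is that every such $\bu''$ automatically has $u''_\bg\le p-1$ for all $\bg$: if $u''_{\bg_0}\ge p$ for some $\bg_0\in\cG_{j_0,C}$, then $(p-1)t_{j_0}\ge u''_{\bg_0}\ge p$ forces $t_{j_0}\ge 2$, and $(p-1)\bv\ge u''_{\bg_0}\bg_0\ge p\,\bg_0$ coordinatewise forces, coordinate by coordinate and using integrality, $\bv-\bg_0\in\Z_{>0}^C$; decreasing $u''_{\bg_0}$ by $p-1$ then produces a point of $\cZ_{B,C}$ whose $\bt$-sum is $|\bt|-1<w_\Z(B,C)$, contradicting the minimality in (\ref{E:integral}). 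Since all $u''_\bg\le p-1$, each coefficient $\prod_\bg\delta_{u''_\bg}=\prod_\bg\frac{1}{u''_\bg!}$ is a $p$-adic unit, so every term is nonzero and each has $A_{j,\bg}$-degree $u''_\bg\le p-1$.

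It then remains to match these $\bu''$ with rational representations of $(\bt,\bv)$ whose denominator divides $p-1$. Given $\bu''$ as above I would set $e:=\gcd_\bg u''_\bg$, $r_\bg:=u''_\bg/e$ (coprime), $d:=(p-1)/e$; a second, entirely analogous minimality argument shows $d\in\Z$, for otherwise some prime $\ell$ has $\ord_\ell(e)>\ord_\ell(p-1)$, whence $\ell^{\,\ord_\ell(e)-\ord_\ell(p-1)}$ divides every $t_j$ ($j\in B$) and every $v_i$ ($i\in C$), and dividing $(\bt,\bv)$ through by that prime power yields a point of $\cZ_{B,C}$ of strictly smaller $\bt$-sum, again contradicting minimality. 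Then $\bv=\tfrac1d\sum_\bg r_\bg\bg$, $\tfrac1d\sum_{\bg\in\cG_{j,C}}r_\bg=t_j$ and $\sum_j t_j=w_\Z(B,C)$, i.e.\ $(d,(r_\bg))$ is a rational representation of $(\bt,\bv)$ in the sense of Definition~\ref{D:representation} with $u''_\bg=\tfrac{p-1}{d}r_\bg$; conversely, a rational representation $(d,(r_\bg))$ with $d\mid p-1$ gives $u''_\bg:=\tfrac{p-1}{d}r_\bg\in\Z_{\ge 0}$, which satisfies the three linear constraints above and hence, by the first minimality step, has all $u''_\bg\le p-1$, so indexes a term. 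Coprimality of the $r_\bg$ makes these two assignments mutually inverse, giving the claimed $1$–$1$ correspondence; regrouping the sum from Part~(1) according to the denominator $d\in\cD$ yields the displayed identity for $G_{(p-1)\bt,(p-1)\bv}(\A)$, the unit coefficients and the degree bound having already been established.

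The only genuine content is the two minimality arguments, and the step I expect to need the most care is verifying that the modified data — $\bv-\bg_0$ in the first argument, $\bv$ divided by a prime power in the second — still lies in $\Z_{>0}^C$ and still has all $t_j\in\Z_{\ge 1}$, so that it really belongs to $\cZ_{B,C}$ and the contradiction with the minimality of $w_\Z(B,C)$ goes through. This is precisely where the hypothesis $(\bt,\bv)\in\cZ^{min}$, rather than merely $(\bt,\bv)\in\cZ_{B,C}$, is used; everything else is bookkeeping with Part~(1) and the definitions of $\cQ_{B,C}$, $\cZ_{B,C}$ and of rational representations.
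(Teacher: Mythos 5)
Your proof is correct and follows essentially the same route as the paper's much terser argument: Part (1) is read off from (\ref{E:polynomia-G}), and Part (2) rests on the minimality of $|\bt|$ forcing $u''_\bg\le p-1$ (the paper's assertion $r_\bg/d\le 1$), which your remove-one-copy-of-$\bg_0$ argument justifies in detail. One minor remark: your second minimality argument for $d\mid(p-1)$ is superfluous, since $\gcd_\bg r_\bg=1$ together with the integrality of all $u''_\bg=\frac{p-1}{d}r_\bg$ already gives $d\mid(p-1)$ by B\'ezout.
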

\begin{proof}
The proof of Part (1) is elementary hence we omit it here.
Notice that since $|\bt|$ is minimal we have $\frac{r_\bg}{d} \leq 1$.
From Part (1) and the observation above  
we have $u''_\bg=(p-1)\frac{r_\bg}{d}\leq p-1$. This implies that $\delta_{u''_\bg} =\frac{1}{u''_\bg !}\in\Z_p^*$.
Then Part (2) follows.
\end{proof}

Define a Hasse polynomial in $(\Q\cap\Z_p)[\A]$ for the trace of $\balpha^a_{\bar{f}_{B,C}}$ as follows 
\begin{eqnarray}\label{E:Hpa}
H_p^{[a]}(\A)&:=&\sum_{(B,C)\in\cK}(-1)^{|B|+|C|+a w_\Z(B,C)} \Tr(N_{B,C}(\A)^{[a]}).
\end{eqnarray}
Write $H_p(\A):=H_p^{[1]}(\A)$ for simplicity then
\begin{equation*}
H_p(\A)=
\sum_{(B,C)\in\cK}(-1)^{|B|+|C|+w_\Z(B,C)}\sum_{(\bt,\bv)\in\cZ^{min}_{B,C}}
G_{(p-1)\bt,(p-1)\bv}(\A).
\end{equation*}

\begin{lemma}\label{L:Hp}
Let $p$ be any prime with $p\equiv 1\bmod d$ for all $d$ in $\cD$. 
Let $(\bt,\bv)\in\cZ^{min}$.

(1) 
Then 
the expansion of $G_{(p-1)\bt,(p-1)\bv}(\A)$ in Lemma \ref{L:key}(2)
has each term in $(\Z_p^*\cap\Q)[\A]$ with $A_{j,\bg}$-degree $\leq p-1$ and  
homogenous of total degree $(p-1) w_\Z(B,C)$.
Furthermore, $H_p(\A)\not\equiv 0\bmod p$. 

(2)
There is a constant $\theta(\cG_1,\ldots,\cG_r)$
such that for $p>\theta(\cG_1,\ldots,\cG_r)$ we have
$\Tr(N_{B,C}(\A)^{[a]})
\not\equiv 0\bmod p
$
and 
$H^{[a]}_p(\A)\not\equiv 0\bmod p$ is homogenous of total degree $(p^a-1)w_\Z(B,C)$,
\end{lemma}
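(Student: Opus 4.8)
The plan is to derive both parts from Lemma~\ref{L:key}(2) together with the bookkeeping observation that the monomials occurring remember the combinatorial data that produced them. For the structural part of~(1), the hypothesis $p\equiv1\pmod d$ for every $d\in\cD$ says exactly that every denominator of a rational representation of a point of $\cZ^{min}$ divides $p-1$, so Lemma~\ref{L:key}(2) applies to each $(\bt,\bv)\in\cZ^{min}$; it exhibits $G_{(p-1)\bt,(p-1)\bv}(\A)$ as a sum over such representations, each term of $A_{j,\bg}$-degree $\le p-1$, with coefficient $\prod_\bg\delta_{u''_\bg}\in\Z_p^*\cap\Q$ (since $u''_\bg\le p-1$ makes $\delta_{u''_\bg}=1/u''_\bg!$ a unit), and of total degree $\sum_\bg u''_\bg=(p-1)w_\Z(B,C)$, hence homogeneous of that degree.

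For $H_p(\A)\not\equiv0\pmod p$ the point to exploit is that a monomial $\A^{\bu''}$ of $G_{(p-1)\bt,(p-1)\bv}(\A)$ determines $(B,C,\bt,\bv)$: one recovers $B$ as the set of $j$ for which some $A_{j,\bg}$ with $\bg\in\cG_j$ divides $\A^{\bu''}$ (for $j\in B$, $\sum_{\bg\in\cG_{j,C}}u''_\bg=(p-1)t_j\ge p-1>0$), then $\bv=\tfrac1{p-1}\sum_\bg u''_\bg\bg$, next $C$ as the support of $\bv$, and finally $t_j=\tfrac1{p-1}\sum_{\bg\in\cG_{j,C}}u''_\bg$. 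Therefore, in $H_p(\A)=\sum_{(B,C)\in\cK}(-1)^{|B|+|C|+w_\Z(B,C)}\sum_{(\bt,\bv)\in\cZ^{min}_{B,C}}G_{(p-1)\bt,(p-1)\bv}(\A)$, the summands have pairwise disjoint monomial supports; each of them is nonzero modulo $p$ by the previous paragraph, so no cancellation can occur and $H_p(\A)\not\equiv0\pmod p$.

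For part~(2) I would first record the degree count. Every block $G_{p\bt-\bt',p\bv-\bv'}(\A)$ of $N_{B,C}(\A)$ is either $0$ or homogeneous of total degree $\sum_j(pt_j-t_j')=p|\bt|-|\bt'|=(p-1)w_\Z(B,C)$, because $|\bt|=|\bt'|=w_\Z(B,C)$ on $\cZ^{min}_{B,C}$; since $\A\mapsto\A^{p^k}$ multiplies degrees by $p^k$, the entries of $N_{B,C}(\A)^{[a]}=N_{B,C}(\A^{p^{a-1}})\cdots N_{B,C}(\A)$, and so $\Tr(N_{B,C}(\A)^{[a]})$, are homogeneous of degree $\sum_{k=0}^{a-1}p^k(p-1)w_\Z(B,C)=(p^a-1)w_\Z(B,C)$. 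Now fix $\theta=\theta(\cG_1,\dots,\cG_r)$ larger than $r$ and than every coordinate of every point of $\cZ^{min}$. For $p>\theta$ one has $pt^{(k+1)}_j-t^{(k)}_j\ge p-r>0$ and $pv^{(k+1)}_i-v^{(k)}_i>0$ along every cyclic term indexed by $\cZ^{min}_{B,C}$, so the ``monomial remembers $(B,C)$'' argument of~(1) applies to each term of $\Tr(N_{B,C}(\A)^{[a]})$; hence these traces, over $(B,C)\in\cK$, have disjoint monomial supports, and it suffices to show that each is nonzero modulo $p$, after which $H^{[a]}_p(\A)=\sum_{(B,C)\in\cK}(-1)^{|B|+|C|+aw_\Z(B,C)}\Tr(N_{B,C}(\A)^{[a]})\not\equiv0\pmod p$ follows.

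To prove $\Tr(N_{B,C}(\A)^{[a]})\not\equiv0\pmod p$, pick $(\bt_0,\bv_0)\in\cZ^{min}_{B,C}$ and a monomial $\A^{\bu}$ of $G_{(p-1)\bt_0,(p-1)\bv_0}(\A)$ (it exists, with each $u_\bg\le p-1$, by~(1)) and track the coefficient of $\A^{(1+p+\cdots+p^{a-1})\bu}$. A term of the trace is a cyclic sequence in $\cZ^{min}_{B,C}$ equipped with a digit $\bu^{(k)}$ for each block; since every block is homogeneous of the \emph{same} degree $(p-1)w_\Z(B,C)=\sum_\bg u_\bg$, all the digit sums $\sum_\bg u^{(k)}_\bg$ coincide, and then $\sum_k p^k\bu^{(k)}=(1+p+\cdots+p^{a-1})\bu$ admits no carry, forcing $\bu^{(k)}=\bu$ for every $k$; comparing the block constraints with those of $G_{(p-1)\bt_0,(p-1)\bv_0}$ gives that $pt^{(k+1)}_j-t^{(k)}_j$ equals the $j$-th entry of $(p-1)\bt_0$ and that $p\bv^{(k+1)}-\bv^{(k)}=(p-1)\bv_0$, which reduced modulo $p$ (using $p>\theta$) force $\bt^{(k)}=\bt_0$ and $\bv^{(k)}=\bv_0$ for all $k$. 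So only the diagonal term contributes to this monomial, with the $p$-adic unit coefficient $\big(\prod_\bg\delta_{u_\bg}\big)^a$, and the claim follows. I expect the main points requiring care to be the ``no carry'' step --- which relies on the block degree being uniformly $(p-1)w_\Z(B,C)$ over $\cZ^{min}_{B,C}$ --- and the passage from the mod-$p$ congruences to genuine equalities, which is precisely what fixes the threshold $\theta$.
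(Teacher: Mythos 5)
Your proof is correct and takes essentially the same route as the paper's: both rest on Lemma \ref{L:key}(2) (unit coefficients $1/u''_\bg!$ and homogeneity of degree $(p-1)w_\Z(B,C)$), on the fact that a monomial determines $(B,C,\bt,\bv)$ so the summands over $\cK$ have disjoint supports, and, for part (2), on the base-$p$ no-carry digit comparison together with a threshold $\theta$ bounding the coordinates of $\cZ^{min}$ that turns congruences mod $p$ into equalities. The only difference is presentational: the paper splits $\Tr(N_{B,C}(\A)^{[a]})$ into its diagonal part $T_1$ and off-diagonal part $T_2$ and shows they share no monomials, while you track the coefficient of the single monomial $\A^{(1+p+\cdots+p^{a-1})\bu}$ and show it is a $p$-adic unit; both hinge on exactly the same carry-free argument.
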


\begin{proof}
(1) 
Notice that
$$H_p(\A)=\sum_{(B,C)\in\cK}
\sum_{(\bt,\bv)\in\cZ^{min}}(-1)^{|B|+|C|+w_\Z(B,C)}
G_{(p-1)\bt,(p-1)\bv}(\A).$$
The first statement is simply rephrasing the statement in 
Lemma \ref{L:key} (2).

By Lemma \ref{L:key}(2) and our hypothesis, 
each term of the expansion of $G_{(p-1)\bt,(p-1)\bv}(\A)$ 
is nonzero mod $p$.
Now we claim that for any $(\bt,\bv)\in \cZ^{min}_{B,C}$ and $(\bt',\bv')\in\cZ^{min}_{B',C'}$ 
where $(B,C)\neq (B',C')$ the corresponding $G_{(p-1)\bt,(p-1)\bv}$ and
$G_{(p-1)\bt',(p-1)\bv'}$ do not have common terms to cancel.
Indeed, suppose $B\neq B'$, then there exists 
$j\in B\backslash B'$ where 
$A_{j,\bg}^{\frac{p-1}{d}r_\bg}$ lies in a monomial of $G_{(p-1)\bt,(p-1)\bv}(\A)\backslash
G_{(p-1)\bt',(p-1)\bv'}(\A)$.
On the other hand suppose $C\neq C'$. 
Then there is $i\in C\backslash C'$ such that $v_i \neq 0$ while $v'_i=0$.
Take its corresponding rational representation 
$\bv=\sum_{\bg} \frac{r_\bg}{d}\bg$ has $r_\bg\neq 0$ for some $\bg\in \cup_{j\in B}\cG_{j,C}$
while $\bv'=\sum_{\bg}\frac{r'_\bg}{d'}\bg$ has $r'_\bg=0$ for $\bg\in \cup_{j\in B}\cG_{j,C}$.
This proves our claim. 
Therefore $H_p(\A)\not\equiv 0\bmod p$.

(2) 
To ease notation we omit subindex $(B,C)$ and $j$ for the rest of the proof.
Notice that 
\begin{eqnarray*}
\Tr(N(\A)^{[a]}) 
&=&
\sum G_{p\bt_1-\bt_2,p\bv_1-\bv_2}(\A^{p^{a-1}})\cdots G_{p\bt_a-\bt_1,p\bv_a-\bv_1}(\A)
\end{eqnarray*}
where $(\bt_i,\bv_i)$ ranges over all of $\cZ^{min}_{B,C}$.
By Lemma \ref{L:key}(2) and our hypothesis
each term in  $G_{p\bt-\bt,p\bv-\bv}(\A)$ lies in $(\Z_p^*\cap\Q)[\A]$
and $A_{j,\bg}$-degree $\leq p-1$. 
Write $\Tr(N(\A)^{[a]})=T_1(\A)+T_2(\A)$
where $T_1(\A)$ consists of all summands with all equal $(\bt_i,\bv_i)$ and 
$T_2(\A)$ with at least one not equal.
Each monomial in $T_1(\A)$ 
is of the form $\prod_{\bg}A_\bg^{\sum_{i=0}^{a-1}p^i u''_{\bg,i}}$
where $p\bv-\bv=\sum_{\bg}u''_{\bg,i}\bg$ and $0\leq u''_{\bg,i}\leq p-1$.
Coefficient of such monomial lies in $\Z_p^*$.
Notice that $T_1(\A)$ is homogenous of total degree
$(p^a-1)w_\Z(B,C)$, where each 
$A_\bg$-degree is $\sum_{i=0}^{a-1}p^i u''_{\bg,i}$.

On the other hand each monomial of 
$
T_2(\A)
$
is of the form
$\prod_{\bg}A_\bg^{\sum_{i=0}^{a-1}p^i u'_{\bg,i}}$;
Suppose this monomial form coincides with one lying in $T_1(\A)$
then $p\bv'-\bv''\equiv p\bv-\bv\bmod p$, and hence
$\bv''\equiv \bv\bmod p$. But $\bv'',\bv$ are bounded points by our hypothesis
so there is a constant 
$\theta(\cG_1,\ldots,\cG_r)$ such that 
for all $p>\theta(\cG_1,\ldots,\cG_r)$ we have $\bv''=\bv$ and hence $p\bv'-\bv''=p\bv'-\bv$.
By comparing $A_\bg$-degrees we have 
$
\sum_{i=0}^{a-1} p^i u''_{\bg,i} = \sum_{i=0}^{a-1} p^i u'_{\bg,i}
$
where $0\leq u''_{\bg,i}\leq p-1$ and $u'_{\bg,i}\in\Z_{\geq 0}$;
moreover, 
$
\sum_{i=0}^{a-1} u''_{\bg,i} = \sum_{i=0}^{a-1} u'_{\bg,i}.
$
This implies that $u''_{\bg,i}=u'_{\bg,i}$ for all $i$
and hence $p\bv'-\bv=p\bv-\bv$ and $\bv=\bv'$ constradicting our assumption above.
Therefore $T_2(\A)$ has no monomial terms in common with 
$T_1(\A)$. 

Using the same argument as that in Part (1)  we find that 
the summands in the expansion
of $H_p^{[a]}(\A)$ in (\ref{E:Hpa}) do not cancel out with each other for distinct $(B,C)\in\cK$,
and hence $H_p^{[a]}(\A)\not\equiv 0\bmod p$. 
\end{proof}

Below we shall single out a special case of $\cG_1,\ldots,\cG_r$ 
in which $H_p(\hat{a})\bmod p$ is a constant polynomial.
Let $\cD$ be the set of all denominators of rational representations of $(\bt,\bv)\in\cZ^{min}$.
Suppose $\cD=\{1\}$, that is,  for all $(\bt,\bv)\in\cZ^{min}$ 
\begin{eqnarray}\label{E:GG}
G_{(p-1)\bt,(p-1)\bv}(\A) 
&=&
\sum_{(\bu,\bv)\in\cZ^{min}_{B,C}}
\prod_{j\in B}\prod_{\bg\in\cG_{j,C}}(\frac{A_{j,\bg}^{p-1}}{(p-1)!})^{r_\bg}
\end{eqnarray}
where $r_\bg=0$ or $1$ by Lemma \ref{L:key}(2).
Let $m_{(\bt,\bv)}$ be the number of all such rational representations of $(\bt,\bv)\in\cZ^{min}_{B,C}$, and 
\begin{equation}\label{E:conditional}
\bc(\cG_1,\ldots,\cG_r):=\sum_{(B,C)\in\cK} (-1)^{w_\Z(B,C)}\sum_{(\bt,\bv)\in\cZ^{min}_{B,C}}m_{(\bt,\bv)}.
\end{equation}

One can compute and verify that Example \ref{Ex:2} satisfies the condition $\bc(\cG)\neq 0$.

\begin{proposition}\label{P:conditional}
Suppose $\cD=\{1\}$. Then 
$\bc(\cG_1,\ldots,\cG_r)\neq 0$
if and only if
for all $\ba\in \bbA^{\cG_1,\ldots,\cG_r}(\Q)$
and for $p$ large enough, we have
$H_p(\hat{\ba})\in\Z_p^*.$
\end{proposition}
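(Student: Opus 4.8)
The plan is to compute $H_p(\hat{\ba})\bmod p$ explicitly under the hypothesis $\cD=\{1\}$ and to identify the resulting constant with $\bc(\cG_1,\ldots,\cG_r)$ modulo $p$. First I would invoke Lemma \ref{L:Hp}(1): since $\cD=\{1\}$, the congruence $p\equiv 1\bmod d$ holds for every $d\in\cD$ and every prime $p$, so the formula
\[
H_p(\A)=\sum_{(B,C)\in\cK}(-1)^{|B|+|C|+w_\Z(B,C)}\sum_{(\bt,\bv)\in\cZ^{min}_{B,C}}G_{(p-1)\bt,(p-1)\bv}(\A)
\]
is valid, and by (\ref{E:GG}) each $G_{(p-1)\bt,(p-1)\bv}(\A)$ equals $\sum \prod_{j\in B}\prod_{\bg\in\cG_{j,C}}\bigl(\tfrac{A_{j,\bg}^{p-1}}{(p-1)!}\bigr)^{r_\bg}$, summed over the $m_{(\bt,\bv)}$ rational representations of $(\bt,\bv)$, with each $r_\bg\in\{0,1\}$. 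Now specialize $\A=\hat{\ba}$ where $\ba\in\bbA^{\cG_1,\ldots,\cG_r}(\Q)$: since $\hat a_{j,\bg}$ is a Teichmüller lift of a nonzero residue, $\hat a_{j,\bg}^{p-1}\equiv 1\bmod p$, and by Wilson's theorem $(p-1)!\equiv -1\bmod p$. Hence each monomial $\prod_{\bg}\bigl(\hat a_{j,\bg}^{p-1}/(p-1)!\bigr)^{r_\bg}$ reduces mod $p$ to $(-1)^{-\sum_\bg r_\bg}=(-1)^{\sum_\bg r_\bg}=(-1)^{|\bt|}=(-1)^{w_\Z(B,C)}$, because $\sum_{\bg}r_\bg=|\bt|=w_\Z(B,C)$ for representations with denominator $1$.

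Substituting back, each of the $m_{(\bt,\bv)}$ representations contributes $(-1)^{w_\Z(B,C)}$, so $G_{(p-1)\bt,(p-1)\bv}(\hat{\ba})\equiv m_{(\bt,\bv)}(-1)^{w_\Z(B,C)}\bmod p$, and therefore
\[
H_p(\hat{\ba})\equiv\sum_{(B,C)\in\cK}(-1)^{|B|+|C|+w_\Z(B,C)}\sum_{(\bt,\bv)\in\cZ^{min}_{B,C}}m_{(\bt,\bv)}(-1)^{w_\Z(B,C)}\bmod p.
\]
The two factors $(-1)^{w_\Z(B,C)}$ cancel, leaving $H_p(\hat{\ba})\equiv\sum_{(B,C)\in\cK}(-1)^{|B|+|C|}\sum_{(\bt,\bv)\in\cZ^{min}_{B,C}}m_{(\bt,\bv)}\bmod p$. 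I would then reconcile this with the stated definition (\ref{E:conditional}) of $\bc(\cG_1,\ldots,\cG_r)$: modulo the sign convention recorded there — and this is a point I would check carefully, since (\ref{E:conditional}) is written with $(-1)^{w_\Z(B,C)}$ rather than $(-1)^{|B|+|C|}$, while on $\cK$ one has $w_\Z(B,C)=\bmu-n+|B|+|C|$ so the two differ by the global constant sign $(-1)^{\bmu-n}$ — the right-hand side is $\pm\bc(\cG_1,\ldots,\cG_r)$. Thus $H_p(\hat{\ba})\bmod p$ is the reduction of a fixed nonzero-or-zero integer independent of $p$ and of the choice of $\ba$.

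With this congruence in hand both directions are immediate. If $\bc(\cG_1,\ldots,\cG_r)\neq 0$, then for all primes $p$ larger than $|\bc(\cG_1,\ldots,\cG_r)|$ we have $\bc(\cG_1,\ldots,\cG_r)\not\equiv 0\bmod p$, hence $H_p(\hat{\ba})\in\Z_p^*$ for every $\ba\in\bbA^{\cG_1,\ldots,\cG_r}(\Q)$; note this also needs $\ord_p H_p(\hat{\ba})\leq 0$, which holds since $H_p(\hat{\ba})$ lies in $\Z_p\cap\Q$ (Lemma \ref{L:key}(2)) and is a unit once its reduction is nonzero. Conversely, if $\bc(\cG_1,\ldots,\cG_r)=0$, then $H_p(\hat{\ba})\equiv 0\bmod p$ for every $p$ and every $\ba$, so $H_p(\hat{\ba})\notin\Z_p^*$ for all large $p$, contradicting the right-hand condition; hence that condition forces $\bc(\cG_1,\ldots,\cG_r)\neq 0$. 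The main obstacle I anticipate is purely bookkeeping: tracking the various sign factors — the $(-1)^{|B|+|C|}$ from (\ref{E:Hpa}), the $(-1)^{w_\Z(B,C)}$ from (\ref{E:matrix})/(\ref{E:GG}), the Wilson sign $(-1)^{|\bt|}$, and the convention in (\ref{E:conditional}) — and confirming that the constant $\bc$ as defined is exactly what emerges (up to an overall sign that does not affect the unit condition). No genuine analytic or combinatorial difficulty arises beyond what Lemmas \ref{L:key} and \ref{L:Hp} already supply.
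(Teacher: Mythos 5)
Your proposal is correct and follows essentially the same route as the paper's proof: evaluate $G_{(p-1)\bt,(p-1)\bv}(\hat{\ba})\bmod p$ via Wilson's theorem and the Teichm\"uller relation, observe the cancellation of the $(-1)^{w_\Z(B,C)}$ factors, and identify $H_p(\hat{\ba})\equiv(-1)^{n-\bmu}\bc(\cG_1,\ldots,\cG_r)\bmod p$, from which both directions follow for $p$ large. Your sign bookkeeping, including the use of $w_\Z(B,C)=\bmu-n+|B|+|C|$ on $\cK$, matches the paper's computation exactly.
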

\begin{proof}
Below we assume $\ba$ is integral by letting $p$ be large enough.
Combining with  Wilson's theorem  $(p-1)!\equiv -1\bmod p$ and
$\hat{\ba}^p\equiv \hat{\ba}\bmod p$, we have by (\ref{E:GG})
\begin{eqnarray*}
G_{(p-1)\bt,(p-1)\bv}(\hat\ba)
&=& \sum
\prod_{j\in B}\prod_{\bg\in\cG_{j,C}}(\frac{\hat{a}_{j,\bg}^{p-1}}
{(p-1)!})^{r_\bg}\\
&\equiv& \sum
\prod_{j\in B}\prod_{\bg\in\cG_{j,C}}(-1)^{r_\bg}\\
&\equiv & \sum
(-1)^{w_\Z(B,C)}\\
&\equiv &(-1)^{w_\Z(B,C)}m_{(\bt,\bv)}\bmod p
\end{eqnarray*}
where the sum is over all rational representations of $(\bu,\bv)$.
Then by (\ref{E:Hpa})
\begin{eqnarray*}
H_p(\hat{\ba})
&\equiv &\sum_{(B,C)\in\cK}(-1)^{|B|+|C|}
\sum_{(\bt,\bv)\in\cZ^{min}_{B,C}} m_{(\bt,\bv)}\\
&\equiv & (-1)^{n-\bmu}\sum_{(B,C)\in\cK} (-1)^{w_\Z(B,C)}
\sum_{(\bt,\bv)\in\cZ^{min}_{B,C}}m_{(\bt,\bv)}\\
&\equiv &(-1)^{n-\bmu} \bc(\cG_1,\ldots,\cG_r) \bmod p.
\end{eqnarray*}
As $\bc(\cG_1,\ldots,\cG_r)$ is a nonzero integer by our hypothesis, 
this proves that $\bc(\cG_1,\ldots,\cG_r)\not\equiv 0\bmod p$
if and only if
$H_p(\hat{\ba})\in\Z_p^*$.
\end{proof}

\section{Proof of main theorem}
\label{S:proof}

Let $\cG_1,\ldots,\cG_r$ be fixed subsets in $\Z_{\geq 0}^n$.
Let $\bbA^{\cG_1,\ldots,\cG_r}$ be the space of all algebraic varieties $V(f_1,\ldots,f_r)$
where $f_1,\ldots, f_r$ are supported on $\cG_1,\ldots,\cG_r$, respectively.
Namely, $f_j=\sum_{\bg\in \cG_j} a_{j,\bg} \bx^\bg$ where $a_{j,\bg}\neq 0$ for all $\bg\in\cG_j$.
For any field $K$ containing $\Q$ (resp. $\F_p$) we shall identify 
each algebraic variety $V$ (resp. $\bar{V}$) in $\bbA^{\cG_1,\ldots,\cG_r}(K)$ by
$\ba=(a_{j,\bg})_{j,\bg}$ (resp. $\bar\ba$) with $a_{j,\bg}\in K$ (resp. $\bar{a}_{j,\bg}$) and
$\bg\in\cG_j$ for $1\leq j\leq r$. 
In this section $V_{\ba}$ for the variety defined by $\ba$.

This section is devoted to prove our main Theorem \ref{T:main}.
It is clear that Theorems \ref{T:2} 
and \ref{T:3} below 
proves Parts (1) and (2) of Theorem \ref{T:main}, respectively.

In the following proposition we show that the polynomial 
$H_p^{[a]}(\A)\in\Q[\A]$ defined in (\ref{E:Hpa})
is indeed a Hasse polynomial for the $p$-adic valuation of 
$|V_\ba(\F_q)|$. For any prime $\wp$ over $p$ of residue degree $a$
let $\bar{V_\ba}$ denote $V_\ba\bmod \wp$.

\begin{proposition}\label{P:HassePolynomial}
Let $\bar\Z$ denote the ring of integral elements in $\bar\Q$.
For any $\ba\in\bbA^{\cG_1,\ldots,\cG_r}(\bar\Z)$, or 
for any $\ba$ in $\bbA^{\cG_1,\ldots,\cG_r}(\bar\Q)$ and 
$p$ large enough,
the following statements are equivalent to each other
\begin{enumerate}
\item 
$|H_p^{[a]}(\ba)|_p=1$
\item 
$H_p^{[a]}(\ba)\not\equiv 0\bmod \wp$ 
\item 
$\ord_q |\bar{V_\ba}(\F_q)| = \bmu$.
\end{enumerate}
\end{proposition}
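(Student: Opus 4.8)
The heart of the matter is Theorem~\ref{T:Dwork}, which expresses $|\bar{V_\ba}(\F_q)|$ as $q^n + \sum_{B,C}(q-1)^{|B|+|C|}q^{n-|B|-|C|}\Tr(M_{B,C}^{[a]})$. The plan is to track the $p$-adic order of each summand and show that the minimal order is realized exactly by the $(B,C)\in\cK$, with the leading $p$-adic coefficient controlled by $H_p^{[a]}(\ba)$. First I would note that $\ord_q q^n = n \geq \bmu$ (with equality only in degenerate cases, which one handles separately or absorbs into the $p$ large enough hypothesis), and that for each $(B,C)$ the summand $(q-1)^{|B|+|C|}q^{n-|B|-|C|}\Tr(M_{B,C}^{[a]})$ has $q$-adic order $\geq (n-|B|-|C|) + w_\Z(B,C) \geq \bmu$ by \eqref{E:trace-bound2} and \eqref{E:b}, using that $(q-1)^{|B|+|C|}$ is a $p$-adic unit. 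So $\ord_q|\bar{V_\ba}(\F_q)| \geq \bmu$ always, recovering Adolphson--Sperber, and equality is equivalent to the sum of the leading terms (those from $(B,C)\in\cK$) being a $q$-adic unit.

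Next I would extract that leading coefficient. For $(B,C)\in\cK$, Lemma~\ref{L:dominate2} gives $\Tr(M_{B,C}(\ba)^{[a]})/q^{w_\Z(B,C)} \equiv (-1)^{a w_\Z(B,C)}\Tr(N_{B,C}(\ba)^{[a]}) \bmod p^{>0}$. Meanwhile $(q-1)^{|B|+|C|} \equiv (-1)^{|B|+|C|} \bmod p$. Substituting into Theorem~\ref{T:Dwork}, dividing by $q^\bmu$, and using $n-|B|-|C|+w_\Z(B,C) = \bmu$ for $(B,C)\in\cK$ (while all other summands and $q^n$ contribute $0$ mod $p^{>0}$), one finds
\begin{equation*}
\frac{|\bar{V_\ba}(\F_q)|}{q^\bmu} \equiv \sum_{(B,C)\in\cK}(-1)^{|B|+|C|+a w_\Z(B,C)}\Tr(N_{B,C}(\ba)^{[a]}) = H_p^{[a]}(\ba) \bmod p^{>0},
\end{equation*}
which is precisely definition \eqref{E:Hpa}. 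Here I must be careful about the residue field: for $\ba \in \bbA^{\cG_1,\ldots,\cG_r}(\bar\Z)$ one works over the completion at a prime $\wp$ of residue degree $a$, and $\bar{V_\ba}$ is the reduction mod $\wp$; the congruences above all take place in the valuation ring of $\Q_q(\gamma)$ suitably enlarged, and one checks the Teichm\"uller lifts $\hat a_{j,\bg}$ of the reductions $\bar a_{j,\bg}$ agree with evaluating $H_p^{[a]}(\A)$ at $\ba$ modulo $\wp$ (this is where $H_p^{[a]}$ having coefficients in $\Z_p\cap\Q$ and the identity $\hat{\ba}^{p^a} \equiv \ba$ matter, and where ``$p$ large enough'' for $\ba \in \bbA(\bar\Q)$ clears denominators).

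From the displayed congruence the equivalences are immediate: $(1) \Leftrightarrow (2)$ is the definition of $p$-adic absolute value versus reduction mod $\wp$; and $(2) \Leftrightarrow (3)$ because $|\bar{V_\ba}(\F_q)|/q^\bmu$ is a $q$-adic unit exactly when its reduction $H_p^{[a]}(\ba) \bmod \wp$ is nonzero, i.e. $\ord_q|\bar{V_\ba}(\F_q)| = \bmu$ rather than strictly larger (it cannot be smaller by the lower bound). I expect the main obstacle to be purely bookkeeping: verifying that \emph{every} term other than those indexed by $\cK$ genuinely contributes order $> \bmu$ after the division --- in particular handling the $q^n$ term and any $(B,C)$ with $w_\Z(B,C) = \infty$ (empty $\cZ_{B,C}$, so $\Tr(M_{B,C}^{[a]}) = 0$) --- and in pinning down the precise threshold on $p$ in the $\bar\Q$ case so that integrality and the Teichm\"uller congruence both hold. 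The Dwork-theoretic input is entirely supplied by Theorems~\ref{T:Dwork} and the lemmas of Section~\ref{S:Hasse}; no new cohomological work is needed.
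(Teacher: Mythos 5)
Your proposal is correct and follows essentially the same route as the paper's own proof: both combine Theorem~\ref{T:Dwork} with Lemma~\ref{L:dominate2}, divide by $q^{\bmu}$, and identify the surviving terms (those with $(B,C)\in\cK$) with $H_p^{[a]}(\ba)$ via definition~(\ref{E:Hpa}), the equivalence $(1)\Leftrightarrow(2)$ being immediate. If anything, you are slightly more explicit than the paper about why the $q^n$ term and the $(B,C)\notin\cK$ summands vanish modulo $\wp$ after the division, which is a welcome bit of bookkeeping.
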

\begin{proof}
For each $\ba$ we may choose for the rest of the proof that 
$p$ is large enough so 
that $\ord_p \ba \geq 0$.
Then let $\hat{\ba}$ be the Teichm\"uller lifting of
$\bar{\ba}$.
The first two statements are clear.
We claim that Parts (2) and (3) are equivalent. 
Applying (\ref{E:dwork2}) and Lemma \ref{L:dominate2}
we have
\begin{eqnarray*}
\frac{|\bar{V_\ba}(\F_q)|}{q^{\bmu}} -q^{n-{\bmu}}
&\equiv &\sum_{(B,C)\in\cK}(q-1)^{|B|+|C|}\frac{\Tr(M_{B,C}(\hat{\ba})^{[a]})}{q^{w_\Z(B,C)}}\bmod \wp
\\\nonumber
&\equiv & \sum_{(B,C)\in\cK}(-1)^{|B|+|C|+aw_\Z(B,C)}\Tr(N_{B,C}(\hat{\ba})^{[a]})\bmod \wp.
\end{eqnarray*}
Comparing to definition (\ref{E:Hpa}) we have
\begin{equation*}
\frac{|\bar{V_\ba}(\F_q)|}{q^{\bmu}}
\equiv H_p^{[a]}(\hat\ba)\equiv H_p^{[a]}(\ba) \bmod \wp.
\end{equation*}
Our claim follows.
\end{proof}

Let $\cZ^{min}$ be as in (\ref{E:Zmin}). 
Let $\cD$ be the set of all denominators $d$ of rational representations of all 
$(\bt,\bv)\in\cZ^{min}$. Let $\theta(\cG_1,\ldots,\cG_r)$ be as in Lemma \ref{L:Hp}.
We shall write $\cP$ for the set of primes $p\equiv 1\bmod d$ for all $d$ lying in $\cD$ 
and $p>\theta(\cG_1,\ldots,\cG_r)$. Observe that it is of positive density in $\Spec(\Z)$
by Dirichlet's theorem on arithmetic progressions.
The following theorem shows that for prime $p$ in $\cP$, 
a subset of $\Spec{\Z}$ of positive density,
there is a {\em Hasse polynomial} $H_p^{[a]}(\A)$
defined over $\Q$ that $H_p^{[a]}(\A)\bmod p$ is a nonzero polynomial
in $\F_p[\A]$ of homogenous degree $(p-1)w_\Z(B,C)$ (as seen 
in Lemma \ref{L:Hp}).

\begin{theorem}\label{T:Hasse}
Let $K$ be a number field and $\cO_K$ its ring of integers.
For any $p \in \cP$ 
let $\cU_p$ be the subset in $\bbA^{\cG_1,\ldots,\cG_r}$
consisting of 
all $\ba$ with $|H_p^{[a]}(\ba)|_p=1$ where $a$ is corresponding residue degree over $p$.
Write $V_\ba$ for the algebraic variety defined by $\ba$ and
$\bar{V_\ba}$ for its reduction $V_\ba\bmod \wp$ with residue degree $a$.
Then for every $V_\ba\in\A^{\cG_1,\ldots,\cG_r}(\cO_K)$ we have 
$V_\ba$ in $\cU_p(\cO_K)$ if and only if $\ord_q|\bar{V_\ba}(\F_q)|=\bmu$.
In particular, 
if $V_\ba\in\cU_p(K)$ and $p$ is large enough we have
$\ord_q|\bar{V_\ba}(\F_q)|=\bmu$.
\end{theorem}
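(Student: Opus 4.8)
The plan is to obtain Theorem \ref{T:Hasse} as a direct corollary of Proposition \ref{P:HassePolynomial}, the remaining content being a translation into the language of the loci $\cU_p$ together with a routine clearing-of-denominators argument for the $K$-rational case. First I would fix, once and for all, a prime $\wp$ of $K$ lying over $p\in\cP$, let $a$ be its residue degree, and take $\bar{V_\ba}=V_\ba\bmod\wp$ and $\cU_p$ to be defined relative to this $a$, exactly as in the statement. By definition, the assertion ``$V_\ba\in\cU_p$'' is literally condition (1) of Proposition \ref{P:HassePolynomial}, namely $|H_p^{[a]}(\ba)|_p=1$. I would also note that since $p\in\cP$ forces $p>\theta(\cG_1,\ldots,\cG_r)$ and $p\equiv 1\bmod d$ for every $d\in\cD$, Lemma \ref{L:Hp} guarantees $H_p^{[a]}(\A)\not\equiv 0\bmod p$, so that $\cU_p$ is a genuine, nonempty constraint; but this nonvacuity is not logically needed for the equivalence itself.

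Next I would dispose of the $\cO_K$-integral case. Since $\cO_K\subseteq\bar\Z$, every $V_\ba\in\bbA^{\cG_1,\ldots,\cG_r}(\cO_K)$ is in particular a point of $\bbA^{\cG_1,\ldots,\cG_r}(\bar\Z)$, so Proposition \ref{P:HassePolynomial} applies with no restriction on $p$ and gives the chain of equivalences; in particular $|H_p^{[a]}(\ba)|_p=1$ is equivalent to $\ord_q|\bar{V_\ba}(\F_q)|=\bmu$. Reading the left-hand side as $V_\ba\in\cU_p(\cO_K)$ (the $\cO_K$-coordinate condition being already assumed), this is exactly the claimed biconditional.

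For the ``in particular'' clause, take $V_\ba\in\cU_p(K)$ with $\ba$ merely $K$-rational. The coefficients $a_{j,\bg}$ form a finite subset of $K^\ast$, so only finitely many primes of $K$ divide the numerator or denominator of some $a_{j,\bg}$; enlarging $p$ we may therefore assume $\ord_\wp(a_{j,\bg})=0$ for all $j,\bg$, so that $\ba$ is $\wp$-integral and its reduction $\bar\ba$ still has all entries in $\F_q^\ast$, i.e.\ $\bar{V_\ba}$ is supported on $\cG_1,\ldots,\cG_r$ as Theorem \ref{T:Dwork} requires. Enlarging $p$ once more so that the ``$p$ large enough'' hypothesis of Proposition \ref{P:HassePolynomial} is in force, that proposition applies to the $\bar\Q$-point $\ba$ and yields $\ord_q|\bar{V_\ba}(\F_q)|=\bmu$ from $|H_p^{[a]}(\ba)|_p=1$. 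The only point needing care --- and the one I would set up carefully at the outset --- is the dependence of $a$, hence of $H_p^{[a]}$ and of $\cU_p$, on the chosen place $\wp$ above $p$, since distinct places have distinct residue degrees; once $\wp$ is fixed there is no further obstacle and the argument is a verbatim appeal to Proposition \ref{P:HassePolynomial}.
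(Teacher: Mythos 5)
Your proposal is correct and follows the paper's own route: the theorem is deduced directly from Proposition \ref{P:HassePolynomial}, with the $\cO_K$-case read off from the $\bar\Z$-case and the $K$-rational case handled by taking $p$ large enough so the coefficients are $\wp$-integral with unit reduction. The extra care you take in fixing the place $\wp$ (hence the residue degree $a$) and in clearing denominators is exactly the bookkeeping the paper leaves implicit when it says the result follows immediately from that proposition.
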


\begin{proof}
It follows immediately 
from Proposition \ref{P:HassePolynomial}.
\end{proof}

\begin{theorem}\label{T:2}
Let $K$ be a number field.
Let $\cU$ be the subset of $\bbA^{\cG_1,\ldots,\cG_r}$ 
consisting of all $\ba$ over $K$ satisfying that $\prod_{p\in\cP} |H_p^{[a(p)]}(\ba)| = 1$ 
where $a(p)$ denote the corresponding residue degree over $p$.
If $V_\ba\in\cU(\cO_K)$ then $\ord_q|\bar{V_\ba}(\F_q)|=\bmu$ for all $p\in\cP$.
If $V_\ba\in\cU(K)$ then $\ord_q|\bar{V_\ba}(\F_q)|=\bmu$ for all 
$p\in\cP$ and $p$ large enough.
\end{theorem}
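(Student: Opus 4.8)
The plan is to reduce Theorem~\ref{T:2} to Theorem~\ref{T:Hasse} by an elementary argument about products of $p$-adic absolute values. First I would fix $\ba$ with $V_\ba \in \cU(\cO_K)$, so that $H_p^{[a(p)]}(\ba) \in \cO_K$ for every $p \in \cP$ (since $H_p^{[a]}(\A)$ has coefficients in $\Q\cap\Z_p$ and $\ba$ is integral, or after enlarging $p$ in the $K$-case). The condition $\prod_{p\in\cP}|H_p^{[a(p)]}(\ba)| = 1$ forces $|H_p^{[a(p)]}(\ba)|_p = 1$ for \emph{every} prime $p \in \cP$: indeed each factor satisfies $|H_p^{[a(p)]}(\ba)|_p \le 1$ by integrality, so if even one factor were $<1$ the product would be $<1$. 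Hence $\ba \in \cU_p(\cO_K)$ for all $p \in \cP$, and Theorem~\ref{T:Hasse} applied at each such $p$ gives $\ord_q|\bar{V_\ba}(\F_q)| = \bmu$. For the $K$-case (not necessarily $\cO_K$), one discards the finitely many primes $p$ dividing the denominators of the $a_{j,\bg}$, applies the same argument at the remaining $p \in \cP$, and invokes the ``$p$ large enough'' clause of Theorem~\ref{T:Hasse}.

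The one point that needs care is making precise the meaning of the infinite product $\prod_{p\in\cP}|H_p^{[a(p)]}(\ba)|$ and verifying it is well-defined. For a fixed $\ba \in \bbA^{\cG_1,\ldots,\cG_r}(K)$, the values $H_p^{[a(p)]}(\ba)$ lie in $K$ and, for all but finitely many $p$, the relevant coefficients are $p$-adic units times explicit integers (cf.\ Lemma~\ref{L:Hp}(2) and Proposition~\ref{P:conditional}); so $|H_p^{[a(p)]}(\ba)|_p = 1$ for all but finitely many $p$ unless $H_p^{[a(p)]}(\ba)$ itself has a genuine prime factor. One interprets the product as the product of the finitely many factors that differ from $1$; membership in $\cU$ is the assertion that this finite product equals $1$, equivalently that \emph{no} factor is $<1$. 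With this reading the deduction above is immediate.

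I expect the main (indeed the only) obstacle to be purely bookkeeping: one must keep straight the distinction between $\ba$ integral over $\cO_K$ versus merely in $K$, and ensure that the set of ``bad'' primes one excludes in the second case (denominators of $\ba$, plus primes below $\theta(\cG_1,\ldots,\cG_r)$) is finite and disjoint from the conclusion one is claiming. There is no new analytic or combinatorial content here — Theorem~\ref{T:Hasse}, which itself rests on Proposition~\ref{P:HassePolynomial} and the nonvanishing statements of Lemma~\ref{L:Hp}, does all the real work — so the proof is a short formal consequence, which is presumably why the excerpt ends with the one-line remark that Theorems~\ref{T:2} and~\ref{T:3} yield Parts (1) and (2) of Theorem~\ref{T:main}.
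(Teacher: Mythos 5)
Your proposal is correct and follows the same route as the paper, which proves Theorem \ref{T:2} simply by invoking Theorem \ref{T:Hasse} at each prime $p\in\cP$; your observation that integrality makes each factor $|H_p^{[a(p)]}(\ba)|_p\le 1$, so the product condition forces every factor to equal $1$, is exactly the intended (and only) reduction, with the $K$ versus $\cO_K$ bookkeeping handled as you describe.
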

\begin{proof}
It follows from Theorem \ref{T:Hasse} above.
\end{proof}

In general the above defined Hasse polynomials $H_p^{[a]}(\A)$ in $\Q[\A]$
depends on $p$. However, for special occasions like (but not limited to)
$\cD=\{1\}$ we found that $H_p^{[a]}(\A)\bmod p$ is a constant
independent of $p$. 

\begin{theorem}\label{T:3}
Suppose $\cD=\{1\}$.
Suppose $\bc(\cG_1,\ldots,\cG_r)\neq 0$ where $\bc(\cdot)$ is defined in (\ref{E:conditional}).
Then for all $V$ in $\bbA^{\cG_1,\ldots,\cG_r}(\Q)$ and $p$ large enough
we have 
$\ord_p |\bar{V}(\F_p)| = \bmu$.
\end{theorem}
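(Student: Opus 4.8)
The plan is to deduce Theorem \ref{T:3} as an almost immediate corollary of the machinery already assembled, specifically Proposition \ref{P:HassePolynomial} and Proposition \ref{P:conditional}. First I would fix an arbitrary $V = V_\ba \in \bbA^{\cG_1,\ldots,\cG_r}(\Q)$, and note that since there are only finitely many coordinates $a_{j,\bg}$, for all sufficiently large $p$ we have $\ord_p \ba \geq 0$, so that $\ba$ is integral at $p$ and the hypothesis of Proposition \ref{P:HassePolynomial} applies with $a=1$ (taking $K = \Q$, residue degree $1$, $q = p$, $\wp = p$). The chain of equivalences there reduces the claim $\ord_p|\bar{V}(\F_p)| = \bmu$ to the statement $H_p(\ba) \not\equiv 0 \bmod p$, i.e.\ $H_p^{[1]}(\hat\ba) \in \Z_p^*$.

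Next I would invoke the hypothesis $\cD = \{1\}$ together with $\bc(\cG_1,\ldots,\cG_r) \neq 0$ and apply Proposition \ref{P:conditional} directly: its conclusion is precisely that, for $p$ large enough, $H_p(\hat\ba) \in \Z_p^*$ for every $\ba \in \bbA^{\cG_1,\ldots,\cG_r}(\Q)$. (The proof of Proposition \ref{P:conditional} already shows the stronger congruence $H_p(\hat\ba) \equiv (-1)^{n-\bmu}\,\bc(\cG_1,\ldots,\cG_r) \bmod p$, so one only needs $p$ large enough that $p \nmid \bc(\cG_1,\ldots,\cG_r)$, in addition to whatever lower bound on $p$ was needed to make $\ba$ integral.) Combining this with the equivalence $(1)\Leftrightarrow(3)$ of Proposition \ref{P:HassePolynomial} gives $\ord_p|\bar{V}(\F_p)| = \bmu$ for all $p$ large enough, which is exactly the assertion of the theorem.

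Strictly speaking there is a small bookkeeping point to address: the "$p$ large enough" in Proposition \ref{P:HassePolynomial} and the "$p$ large enough" in Proposition \ref{P:conditional} are both thresholds depending only on $\cG_1,\ldots,\cG_r$ and on $\ba$ (through the bound on $\ord_p \ba$ and the divisors of $\bc(\cG_1,\ldots,\cG_r)$), so one simply takes the maximum of the two, and the resulting threshold depends only on $V$ and $\cG_1,\ldots,\cG_r$. Since $\Z \subseteq \Q$ and $\bc(\cG_1,\ldots,\cG_r)$ is a fixed nonzero integer, finitely many primes are excluded, consistent with "all $p$ large enough."

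I do not anticipate a genuine obstacle here: all the real work — the combinatorial identity (\ref{E:b}), the $\A$-deformation of the Dwork matrix, the non-vanishing analysis of $H_p$ via the $\cD = \{1\}$ simplification (\ref{E:GG}) and Wilson's theorem — has been carried out in Sections \ref{S:Dwork} and \ref{S:Hasse}. The only thing to be careful about is making sure the hypothesis $\cD = \{1\}$ is used to license the clean formula for $G_{(p-1)\bt,(p-1)\bv}(\A)$ with exponents $r_\bg \in \{0,1\}$ that feeds Proposition \ref{P:conditional}; absent that hypothesis one would be in the situation of Theorem \ref{T:2} where $H_p$ genuinely depends on $p$ and no uniform-in-$p$ conclusion is available. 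So the proof is essentially a two-line citation, and that is how I would write it.
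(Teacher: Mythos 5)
Your proposal is correct and follows essentially the same route as the paper: the paper's proof of Theorem \ref{T:3} is exactly the two-step citation you describe, invoking Proposition \ref{P:conditional} (under $\cD=\{1\}$ and $\bc(\cG_1,\ldots,\cG_r)\neq 0$) to get $H_p(\ba)\in\Z_p^*$ for $p$ large, and then Proposition \ref{P:HassePolynomial} to convert this into $\ord_p|\bar{V}(\F_p)|=\bmu$. Your extra bookkeeping about taking the maximum of the two thresholds (integrality of $\ba$ at $p$ and $p\nmid\bc(\cG_1,\ldots,\cG_r)$) is a harmless elaboration of what the paper leaves implicit.
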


\begin{proof}
By Proposition \ref{P:conditional}  we know that 
for all $\ba$ in $\bbA^{\cG_1,\ldots,\cG_r}(\Q)$
for $p$ large enough
$H_p(\ba)\in\Z_p^*\cap\Q$.
Thus by Proposition \ref{P:HassePolynomial} we have
$\ord_p|\bar{V}(\F_p)| = \bmu$ 
\end{proof}

We remark that the condition $\cD=\{1\}$ is an indicator of how sparse the
given system $\cG_1,\ldots,\cG_r$ is. We have the following more explicit 
criterion in constructing such supporting sets. 

\begin{proposition}\label{P:D}
Let $\cG_1,\ldots,\cG_r$ be given subsets of points in $\Z^n_{\geq 0}$.
If
$\sum_{\bg\in\cG_{j,C}}u_\bg \bg \in\Z_{\ge 1}^C$ for each $1\leq j\leq r$
with $\sum_{\bg\cG_{j,C}} u_\bg \in\Z_{\geq 1}$ and $u_\bg\in\Q_{\geq 0}$
implies that $u_\bg\in \Z$ then 
$\cD=\{1\}$.
\end{proposition}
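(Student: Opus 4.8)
The plan is to show that the hypothesis forces every rational representation appearing in Definition \ref{D:representation} to be an integral one, so that the denominator $d$ must equal $1$. Concretely, suppose $(\bt,\bv)\in\cZ^{min}$ lies in $\cZ^{min}_{B,C}$ and has a rational representation $\bv=\frac{1}{d}\sum_{j\in B}\sum_{\bg\in\cG_{j,C}} r_\bg\bg$ with $d,r_\bg\in\Z_{\geq 0}$, the $r_\bg$ coprime, $\frac{1}{d}\sum_{\bg\in\cG_{j,C}}r_\bg = t_j\in\Z_{\geq 1}$ for all $j\in B$, and $|\bt|=w_\Z(B,C)$. Set $u_\bg := r_\bg/d\in\Q_{\geq 0}$ for $\bg\in\cup_{j\in B}\cG_{j,C}$, and set $u_\bg:=0$ for the remaining $\bg\in\cG_{j,C}$ with $j\notin B$. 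First I would check that this extended $(u_\bg)$ satisfies the hypothesis of the Proposition: we have $\sum_{\bg\in\cG_{j,C}}u_\bg\bg = \bv\in\Z_{\geq 1}^C$ when $j$ ranges appropriately, more precisely each such partial sum is a nonnegative integer vector supported on $C$, and $\sum_{\bg\in\cG_{j,C}}u_\bg = t_j\in\Z_{\geq 1}$ for $j\in B$ while the sum is $0$ (hence in $\Z$, though the hypothesis as stated asks for $\Z_{\geq 1}$ — so one restricts attention to $j\in B$, or equivalently replaces $B$ by the full index set by padding with zero coefficients only where the row-sum condition is still met). The subtle point here is matching the exact quantifier ``for each $1\le j\le r$'' in the Proposition with ``for each $j\in B$'' in the definition of a rational representation; I expect this is handled by noting that for $j\notin B$ one simply does not use those $\cG_{j,C}$ at all, so the relevant instance of the hypothesis is the one over the index set $B$.

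Once the hypothesis applies, it yields $u_\bg = r_\bg/d\in\Z$ for every $\bg$. Since the $r_\bg$ are coprime and all $r_\bg/d$ are integers, $d$ divides $\gcd_\bg(r_\bg)=1$, so $d=1$. As $(\bt,\bv)\in\cZ^{min}$ was arbitrary and its rational representation was arbitrary, every denominator in $\cD$ equals $1$, i.e. $\cD=\{1\}$. Since $\cZ^{min}$ is nonempty (as noted after (\ref{E:Zmin})) and each of its elements admits at least one rational representation — clearing denominators in the defining equations $\bv=\sum u_\bg\bg$, $t_j=\sum_{\bg\in\cG_{j,C}}u_\bg$ from (\ref{E:Z}) — the set $\cD$ is nonempty, so $\cD=\{1\}$ rather than $\cD=\emptyset$.

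The main obstacle I anticipate is purely bookkeeping rather than mathematical depth: reconciling the index sets (the hypothesis is stated for all $j$ in $\{1,\dots,r\}$ while rational representations only involve $j\in B$, and the coordinate set $C$ is fixed throughout) and making sure that the vector $\sum_{\bg\in\cG_{j,C}}u_\bg\bg$ lands in $\Z_{\geq 1}^C$ — it lands in $\Z^C$ because it is a partial sum contributing to the integral vector $\bv\in\Z_{>0}^C$, but positivity of each individual partial sum is not automatic, so one should phrase the hypothesis (as the author evidently intends) as applying to the representation as a whole with $\bv=\sum_{j\in B}\sum_{\bg\in\cG_{j,C}}u_\bg\bg\in\Z_{>0}^C$ and the per-$j$ conditions being on the scalar sums $\sum_{\bg\in\cG_{j,C}}u_\bg\in\Z_{\geq 1}$. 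Modulo that careful reading of the statement, the argument is the short coprimality observation above.
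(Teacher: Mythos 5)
The paper states Proposition \ref{P:D} with no proof at all, so there is no official argument to compare against; your proposal supplies exactly the argument that was evidently intended, and its core is correct: given a rational representation $\bv=\frac{1}{d}\sum_{j\in B}\sum_{\bg\in\cG_{j,C}}r_\bg\bg$ of some $(\bt,\bv)\in\cZ^{min}$, set $u_\bg=r_\bg/d$, invoke the integrality hypothesis to get $u_\bg\in\Z$ for all $\bg$, so $d$ divides every $r_\bg$ and hence divides their gcd, which is $1$ by the coprimality built into Definition \ref{D:representation}; together with your observation that $\cD\neq\emptyset$ (every $(\bt,\bv)\in\cZ^{min}_{B,C}$ admits at least one rational representation, by the very definition (\ref{E:Z}) of $\cZ_{B,C}$ and clearing denominators), this yields $\cD=\{1\}$.

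Your decision to read the hypothesis as applying to the representation as a whole is not merely a charitable gloss; it is forced, and you were right to flag it. Under the strictly literal per-$j$ reading, in which the vector sum $\sum_{\bg\in\cG_{j,C}}u_\bg\bg$ must itself lie in $\Z_{\geq 1}^C$ for each single $j$, the proposition is false: take $n=2$, $r=2$, $\cG_1=\cG_2=\{(1,0),(0,1)\}$. Each $\cG_j$ separately satisfies that condition (the vector requirement already forces the coefficients to be positive integers), yet one computes $\bmu=0$ with $\cK=\{(\{1,2\},\{1,2\})\}$, $w_\Z(\{1,2\},\{1,2\})=2$, and $(\bt,\bv)=((1,1),(1,1))\in\cZ^{min}$ has the rational representation with all four coefficients $u_\bg=\tfrac12$, so $2\in\cD$. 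Hence the hypothesis must be applied, as you do, with the integrality and positivity imposed on the total sum $\bv\in\Z_{>0}^C$ and the per-$j$ conditions only on the scalar sums $\sum_{\bg\in\cG_{j,C}}u_\bg$, for $j$ ranging over the relevant set $B$. The one residual caveat, which you partially acknowledge, is that padding a representation over a proper subset $B$ by zero (or unit) coefficients for $j\notin B$ does not always reduce to the statement over the full index set (for instance when $\cG_{j,C}=\emptyset$ for some $j\notin B$), so the hypothesis should be understood as quantified over all pairs $(B,C)$, or at least over those in $\cK$; with that understanding your proof is complete.
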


\begin{remark}
Applying Proposition \ref{P:D}, we have $\cD=\{1\}$ for 
Example \ref{Ex:2}. 
\end{remark}


\begin{thebibliography}{ZZZZZ}

\bibitem[AS87]{AS87}
{\sc A. Adolphson and S. Sperber:}
$p$-adic estimates for exponential sums and the theorem of Chevalley-Warning.
{\it Ann. Sci. \'Ecole. Norm. Sup.} {\bf 20} (1987), 545--556.

\bibitem[Art65]{Art65}
{\sc E. Artin:}
{\it Collected Papers}, 
Springer-Verlag, New York, 1965.

\bibitem[Ax64]{Ax64}
{\sc J. Ax:}
Zeros of polynomials over finite fields.
{\it Amer. J. Math.}
{\bf 86} (1964), 255--261.

\bibitem[Che36]{Che36}
{\sc C. Chevalley:}
Demonstration d'une hypothese de M. Artin, 
{\it Abh. Math. Sem. Univ. Hamburg} 
{\bf 11} (1936),
73--75.

\bibitem[Dwo60]{Dwo60}
{\sc B. Dwork:} 
On the rationality of the zeta function of an algebraic variety, 
{\it Amer. J. Math.} {\bf 82} (1960),
631--648.

\bibitem[Kat71]{Kat71}
{\sc Nicholas Katz:}
On a theorem of Ax.
{\it Amer. J. Math.}
{\bf 93} (1971), 485--499.

\bibitem[MM95]{MM95}
{\sc O. Moreno and C. Moreno:}
Improvement of Chevalley-Warning and the Ax-Katz theorems.
{\it Amer. J. Math.}
{\bf 117} (1995), 241--244.

\bibitem[Ser62]{Ser62}
{\sc J-P Serre:}
Endomorphismes compl\`etements continus des espaces de Banach
$p$-adique,
{\it Inst. Hautes. \'Etudes Sci. Publ. Math.} {\bf 12}
(1962), 69--85 (French).

\bibitem[Wan89]{Wan89}
{\sc Daqing Wan:}
An elementary proof of a theorem of Katz.
{\it Amer. J. Math.} {\bf 111} (1989), 1--8.

\bibitem[War36]{War36}
{\sc E. Warning:} 
Bermerkung zur vorstehenden Arbeit von Herrn Chevalley, 
{\it Abh. Math. Sem. Univ. Hamburg}
{\bf 11} (1936), 76--83.

\end{thebibliography}
\end{document}